\definecolor{darkorchid}{rgb}{0.6, 0.2, 0.8}
\theoremstyle{plain}
\newtheorem{thm}{Theorem}[section]
\newtheorem{lemma}[thm]{Lemma}
\newtheorem{prop}[thm]{Proposition}
\newcommand{\Span}{\mathrm{Span}}
\theoremstyle{definition}
\newtheorem{defn}[thm]{Definition}
\theoremstyle{remark}
\newtheorem{rmk}[thm]{Remark}
\theoremstyle{plain}
\numberwithin{equation}{section}
\newcommand{\NN}{\mathbb{N}}
\newcommand{\ZZ}{\mathbb{Z}}
\newcommand{\del}{\delta}
\renewcommand{\phi}{\varphi}
\def\udot#1{\ifmmode\oalign{$#1$\crcr\hidewidth.\hidewidth
    }\else\oalign{#1\crcr\hidewidth.\hidewidth}\fi}
\def\RR{\mathbb{R}}
\def\ZZ{\mathbb{Z}}
\def\TT{\mathbb{T}}
\def\QQ{\mathbb{Q}}
\title{Arbitrary finite intersections of doubling measures and applications}
\author{Theresa C. Anderson}
\address{(Anderson) Department of Mathematics, Carnegie Mellon University}
\author{Elisa Bellah}
\address{(Bellah) Department of Mathematics, Carnegie Mellon University}
\author{Zoe Markman}
\address{(Markman) Department of Mathematics, Swarthmore College}
\author{Teresa Pollard}
\address{(Pollard) Department of Mathematics, New York University}
\author{Josh Zeitlin}
\address{(Zeitlin) Department of Mathematics, Yale University}
\begin{document}
\maketitle
\author
\begin{abstract}
Using a wide array of machinery from diverse fields across mathematics, we provide a construction of a measure on the real line which is doubling on all $n$-adic intervals for any finite list of $n\in\NN$, yet not doubling overall. In particular, we extend previous results in the area, where only two coprime numbers $n$ were allowed, by using substantially new ideas. In addition, we provide several nontrivial applications to reverse H\"older weights, $A_p$ weights, Hardy spaces, BMO and VMO function classes, and connect our results with key principles and conjectures across number theory.
\end{abstract}


\section{Introduction}

A longstanding folkloric question in harmonic analysis is to construct a measure $\mu$ on the real line that is $n$-adic doubling for every $n$, yet not doubling overall (see Section \ref{two bases} for definitions).  Analogous, and similarly longstanding, analogues of this question exist for other ubiquitous objects in analysis, such as the reverse H\"older weight classes, Muckenhoupt $A_p$ weight classes, bounded mean oscillation (BMO) function classes, Hardy spaces ($H^1$) and the vanishing mean oscillation (VMO) function class. These classes have been extensively studied and we refer the reader to \cite{CN}, \cite{Cruz}, \cite{G}, \cite{GJ}, \cite{LPW}, and \cite{S}, among others, for the requisite background. 

Outside of unpublished lecture notes of Peter Jones from 1978 showing that $BMO_2\cap BMO_3 \neq BMO$, the authors are aware of little progress toward these questions until 2019. (The only reference to these notes that we are aware of in the literature come from a paper of Krantz \cite{Krantz}; a sketch of Jones's results is given, but details are sparse.) Then in 2019, Boylan, Mills and Ward explicitly constructed a measure on $[0,1]$ that was dyadic and triadic doubling, yet not doubling overall. This involved a careful re-weighting procedure and technical calculations; moreover the result relied on number theory that did not generalize beyond the primes $2$ and $3$ (corresponding to the \emph{bases} $2$ and $3$ for dyadic and triadic intervals). In 2022, Anderson and Hu were able to extend the Boylan-Mills-Ward construction to any distinct prime bases $p$ and $q$, which required several new ideas. Notably, the number theory behind the bases, though elementary, was significantly more intricate, along with the geometry behind selecting intervals on which to re-weight the measure. Additionally, a new procedure to avoid the specific calculations of \cite{BMW} was developed. Anderson, Travesset and Veltri extended this construction to coprime bases, which only involved changing the number theoretic part of the construction \cite{ATV}. However, there was still a significant gap in resolving the folkloric question about measures. Here, we tighten this gap as well as provide a wide-range of new applications for all the weight and function classes mentioned above. Namely, we are able to extend the results of \cite{AH} to any finite set of \emph{arbitrary} bases (which includes any two bases, greatly generalizing \cite{ATV}). Moreover, our proofs use a novel interplay of number theory, topology, group theory, measure theory, geometry, harmonic and functional analysis. Finally we make precise an analogy to the \emph{Hasse principle} from arithmetic geometry in this harmonic analytic setting.  

Our first contribution is to extend the Anderson-Hu construction to any two bases, which relies in part on lemmas from \cite{AH} that are generalized herein, rather than the more technical machinery from \cite{ATV}. However, the main contribution is not from the more general elementary number theory developed here, but on the interval selection used to re-weight the measure. A key part of \cite{AH} involved showing that specific distinguished points $Y(J)$ on $p$-adic intervals $J$ and $Z(I)$ on $q$-adic intervals $I$ were extremely close. That is, for any $\varepsilon>0$ we have 
\begin{equation}
\label{key closeness criterion}
    |Y(J)-Z(I)|<\varepsilon|I|.
\end{equation}
While this inequality can be shown to hold for coprime bases, it does not extend to the general two bases case; our insight is to replace $Y(J)$ with a revolving distinguished point $\zeta$ that may change from interval to interval, yet still satisfies the closeness relationship. Moreover, our new interval selection and re-weighting procedure automatically generalizes to a finite list of multiples of the larger base. This variant shows that there is a dyadic, triadic and $6$-adic doubling measure that is not doubling, as an example, complimenting results in \cite{AH2}, \cite{LPW}, \cite{PWX}, \cite{TM} \cite{Wu}. In attempting to extend this construction to three arbitrary bases we discovered an observation connected with \emph{far numbers} that illustrates the limit of this construction and justifies the need for a new one. A detailed number theoretic explanation that our construction would be difficult to generalize farther is given in the appendix.

This leads to our second main contribution -- constructing a measure that is $n$-adic doubling for any finite list of $n$, yet not overall.  This result generalizes all previous described work, while using a significantly different argument.
\begin{thm} 
    \label{finite bases theorem}
    There exists an infinite family of measures that are simultaneously $n_i$-adic doubling for any finite list of positive integers $n_1, \dots, n_k$, but not doubling overall.
\end{thm}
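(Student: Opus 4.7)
The plan is to construct $\mu$ as a weak-$*$ limit $\mu = \lim_m \mu_m$ on $[0,1]$, starting from $\mu_0$ equal to Lebesgue measure and obtaining $\mu_{m+1}$ from $\mu_m$ by multiplying its density on a carefully chosen small interval $R_m$ by a factor $1+c_m$. The crucial departure from the Anderson--Hu framework, which couples distinguished points on $p$-adic and $q$-adic intervals via the closeness criterion \eqref{key closeness criterion}, is that we do not try to align distinguished points across bases at all. Instead we choose each $R_m$ so that it sits deeply interior to some $n_i$-adic interval at every scale comparable to $|R_m|$, simultaneously for all $i \in \{1,\ldots,k\}$, and additionally so that the endpoints of $R_m$ are Diophantine-generic with respect to every base. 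Since the obstruction to aligning distinguished points for three or more arbitrary bases is precisely the far-numbers phenomenon flagged in the introduction, this change of strategy is what allows the argument to extend.

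The central technical input is a quantitative simultaneous-genericity statement. Given bases $n_1,\ldots,n_k$, tolerance $\eta > 0$ small compared to $1/k$, and any sufficiently small $\ell > 0$, there exists an interval $R \subset [0,1]$ of length $\ell$ such that for each $i$: (i) $R$ is contained in some $n_i$-adic interval $I_i$ with $|R|/|I_i| < \eta$ and $\mathrm{dist}(R,\partial I_i) \geq \eta^{-1}|R|$; and (ii) the endpoints of $R$ lie at distance at least $\gamma\, n_i^{-j}$ from every $n_i$-adic lattice point at scale $j$, for a uniform $\gamma = \gamma(\eta,n_1,\ldots,n_k) > 0$. Condition (i) follows from a union-bound estimate, since the bad set near any single base's lattice has measure $O(\eta)$, and summing over $k$ bases stays below $1$ once $\eta \ll 1/k$. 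Condition (ii), a badly-approximable-type Diophantine property, can be obtained either by Baire category applied to the dense $G_\delta$ set of multi-base generic points, or by an explicit Borel--Cantelli construction tailored to the bases.

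With $R_m$ chosen nested ($R_{m+1} \subset R_m$) and satisfying both (i) and (ii) at a schedule $(\eta_m,\gamma_m)$, each $n_i$-adic doubling constant stays uniformly bounded under the iteration: because $R_m$ is small inside $I_i^{(m)}$ and its endpoints stay uniformly far from $n_i$-adic boundaries at every finer scale, the mass ratio of any pair of sibling $n_i$-adic intervals is perturbed at stage $m$ by a factor in $[1,1 + C c_m \delta_m]$, where $\delta_m = |R_m|/|I_i^{(m)}| < \eta_m$ and $C = C(\gamma,n_1,\ldots,n_k)$. Choosing $c_m \to \infty$ but $\sum_m c_m \delta_m < \infty$ keeps the relevant products finite, so each $n_i$-adic doubling constant passes to the weak-$*$ limit. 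Overall doubling nonetheless fails: a Euclidean-adjacent interval $R_m'$ of the same length as $R_m$, chosen outside the nested sequence $\{R_j\}_{j \geq m}$, satisfies $\mu(R_m') \lesssim |R_m'|$ while $\mu(R_m) \gtrsim (1+c_m)|R_m|$, giving a ratio blowing up in $m$.

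The infinite family of measures then comes from the abundance of admissible $R_m$ at each stage: different nested sequences, or different schedules $(c_m,\eta_m,\gamma_m)$, yield uncountably many distinct measures satisfying the conclusion. I expect the main obstacle to be the simultaneous-genericity step, especially the uniform-in-scale Diophantine control (ii) of the endpoints of $R_m$ across the finite list of bases; the bad-set bounds must be summed carefully in a Borel--Cantelli-type argument, and the resulting constants depend on the arithmetic of the bases (in particular on shared prime factors), which is precisely where the advertised interplay of number theory, topology, measure theory, and geometry is marshaled.
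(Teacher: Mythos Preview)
Your proposal has a genuine gap at the perturbation estimate. The bound $[1,1+Cc_m\delta_m]$ on sibling ratios is only valid at $n_i$-adic scales coarse enough that $R_m$ sits inside a single sibling. At scales finer than $|R_m|$ the situation is different: take any such $n_i$-adic interval $J$ whose interior contains the left endpoint $a$ of $R_m$, and let $a$ lie in its $\ell$-th child $J_\ell$. If $\ell\ge 2$ then $J_{\ell-1}$ is entirely outside $R_m$ while $J_\ell$ (and $J_{\ell+1}$ when $\ell<n_i$) carries weight at least $1+\gamma c_m$ times that of $J_{\ell-1}$, so the sibling ratio is at least $1+\gamma c_m\to\infty$. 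Your far-number condition (ii) only guarantees that $a$ sits $\gamma|J_\ell|$ away from the endpoints of whichever child contains it; it does not and cannot force $\ell=1$, since $\ell=1$ at every scale would mean every base-$n_i$ digit of $a$ is zero, contradicting (ii) outright. Nor can you retreat to bounded $c_m$: then $\mu(R_m)/\mu(R_m')$ stays bounded and doubling does not fail by your mechanism. The difficulty is intrinsic to any construction that multiplies the density by a single large factor on one interval---the resulting jump is visible to $n_i$-adic siblings at every finer scale.

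The paper avoids this by never creating an unbounded jump. It uses the Anderson--Hu iterated reweighting on nested \emph{dyadic} children with fixed factors $a<1<b$ (so adjacent dyadic siblings at any one scale differ by at most $b/a$), accumulating the ratio $(b/a)^\alpha$ that breaks doubling only across $\alpha$ generations. The new ingredient for finitely many bases is Lemma~\ref{finite base approx}: since $0$ lies in the closure of the $\ZZ$-orbit of $(\log_{n_1}2,\dots,\log_{n_k}2)$ in $\TT^k$ (via Proposition~\ref{dense in torus}), there exist arbitrarily large $x$ with $\left|2^{-x}-n_i^{-[x\log_{n_i}2]}\right|<\eps\,2^{-x}$ for all $i$ simultaneously. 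Setting $I_2^{(\alpha)}=[\alpha,\alpha+2^{-(x-1)})$ then gives $|Y(I_{n_i}^{(\alpha)})-Z(I_2^{(\alpha)})|<\eps|I_2^{(\alpha)}|$ for every $i$, and the exhaustion procedure of \cite{AH} (which needs only this closeness in absolute value) bounds all $n_i$-adic sibling ratios by constants depending on $a,b,n_i$. It is the gradual multi-scale weight variation together with this simultaneous alignment---not any Diophantine genericity of interval endpoints---that makes $n_i$-adic doubling go through.
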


The new process involves firstly using the whole real line, instead of just $[0,1]$ to select distinguished dyadic intervals on which to reweight our measure (though later on we are able to compactify our results). This process stems from relaxing the restriction \eqref{key closeness criterion} by adding absolute values. Namely, let $\alpha\in\NN$ and $\varepsilon< 2^{-2\alpha}$. We show that inside $[\alpha,\alpha+1]$ there exists some arbitrarily large $x\in\NN$ such that
\[\left|\frac{1}{2^x}-\frac{1}{n_i^{[x\log_{n_i}2]}}\right|<\frac{\varepsilon}{2^x}\] for all $n_i$ in our finite list, where the brackets denotes the closest integer function. In this new construction we are choosing the sizes of each interval simultaneously, as opposed to \cite{AH} where they are chosen independently. The proof of the existence of such an $x$ involves an intricate interplay of topology, number theory and geometry, and has deep connections with Schanuel's conjecture (an outstanding open problem in transcendental number theory \cite{Sch}).  Finally, we make partial progress toward the infinite base case and show the significant obstacles that would need to be overcome for this case to be true.

Both of these constructions lead to our third contribution of applications to weight and function classes.  Notably, we are able to weave previous progress initiated in \cite{AH} with both functional analysis and our new constructions to show that any finite intersection of corresponding weight and function classes is never the whole class. We show

\begin{thm}
\label{application theorem}
   Let $S \subset \NN$ be finite. Then we have:
    \begin{enumerate}
        \item $\bigcap_{n \in S}RH_r^n \neq RH_r$
        \item $\bigcap_{n\in S} A_p^n\neq A_p$
        \item $\bigcap_{n \in S}BMO_n \neq BMO$
        \item $\sum_{n\in S}H^1_n \neq H^1$
        \item $\bigcap_{n \in S}VMO_n \neq VMO$
    \end{enumerate}
    Precise definitions of all classes are given in Section \ref{applications}.  All these results hold over both $\RR$ and $\TT = [0,1]$.  In particular, these results hold for sums and intersections involving two arbitrary natural numbers.
\end{thm}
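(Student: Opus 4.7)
The plan is to use Theorem~\ref{finite bases theorem} as the engine driving every statement of the theorem. From a measure $\mu$ that is $n$-adic doubling for every $n\in S$ but fails to be doubling overall, one manufactures a weight $w$ (essentially its Radon--Nikodym density, after the appropriate normalization on the atoms of each $n$-adic grid) which inherits the $n$-adic regularity in sufficient strength to land in $A_p^n$ and $RH_r^n$. This reduction from ``doubling'' to ``$A_p$/$RH_r$'' follows the template already developed in \cite{BMW,AH,ATV}: on each $n$-adic interval the two-sided doubling estimate supplies uniform $A_p^n$ and $RH_r^n$ constants, while the failure of global doubling on the very intervals produced inside the proof of Theorem~\ref{finite bases theorem} immediately precludes global membership in $A_p$ or $RH_r$. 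This handles parts (1) and (2).

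For (3), set $b=\log w$. The quantitative equivalence between $A_\infty$ and $BMO$, applied separately to each $n$-adic grid, places $b$ in every $BMO_n$ with $n\in S$, whereas the same sequence of intervals witnessing the failure of global doubling of $\mu$ forces unbounded mean oscillation of $b$, so $b\notin BMO$.

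For (4) and (5) I would pass to duality. The standard identification $(H^1_n)^\ast=BMO_n$ for the $n$-adic Hardy space, together with the Banach-space fact that the dual of a sum is the intersection of the duals, gives $\bigl(\sum_{n\in S}H^1_n\bigr)^\ast=\bigcap_{n\in S}BMO_n$, a space that by (3) strictly contains $BMO=(H^1)^\ast$. An open-mapping/Hahn--Banach argument then forces $\sum_{n\in S}H^1_n\subsetneq H^1$. The $VMO$ case follows similarly via the Coifman--Weiss duality $(VMO)^\ast=H^1$ and its $n$-adic analogues, or more concretely by truncating $b=\log w$ with smooth cutoffs to manufacture an element of every $VMO_n$ that fails to lie in $VMO$. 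The passage from $\RR$ to $\TT=[0,1]$ is handled by the localization already built into Theorem~\ref{finite bases theorem}.

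The main obstacle I anticipate is not in the duality steps but in (1) and (2): promoting the $n$-adic doubling of $\mu$ to the quantitatively stronger $n$-adic $A_p$ and $RH_r$ conditions on the associated weight, uniformly in $n\in S$. Plain doubling is strictly weaker than $A_\infty$, so the densities on each $n$-adic atom must be chosen with care, respecting the interval-selection procedure from the proof of Theorem~\ref{finite bases theorem} simultaneously for each base in $S$. Working out this simultaneity for an arbitrary (not necessarily coprime) finite list of bases is the substantive technical point, and it is where the extra flexibility of the revolving distinguished point $\zeta$ mentioned in the introduction will be essential.
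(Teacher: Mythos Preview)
Your proposal is correct and follows essentially the same route as the paper: weight $w_\mu$ from the measure for (1)--(2), $\log w_\mu$ for (3), then the duality chain $(H^1)^\ast=BMO$ and $(VMO)^\ast=H^1$ (with their $n$-adic analogues) for (4)--(5). The one point worth recalibrating is your assessment of the ``main obstacle'': promoting $n$-adic doubling to $n$-adic $RH_r$/$A_p$ is not where the difficulty lies, because the reweighting procedure on the selected intervals is \emph{identical} to that of \cite{AH}, so the explicit $a,b$ structure of $w_\mu$ (not merely its doubling) yields the $RH_r^n$ and $A_p^n$ bounds by the same computations already carried out there; the revolving point $\zeta$ and the simultaneous interval selection are fully absorbed into Theorem~\ref{finite bases theorem} and play no further role in this section.
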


These results are different from, and complimentary to, the array of results in \cite{LPW}, where the authors show that the intersection (or sum) of two generalized dyadic function classes is indeed the full function class (see also \cite{AHJOW} for discussion and extension of their results). Thus, our results illustrate the subtle difference among generalized $n$-adic grids for a single $n$ and intersections of standard $n$-adic grids. Finally, the proofs revolve around a construction stemming from our measure, which is modified and interwoven with key relationships between the weight and function classes, both specifically and in a general functional analysis sense.

We write toward a broad audience and do not assume any deep familiarity with the techniques in \cite{AH}, but rather introduce tools and concepts as we go. Several times we simultaneously give details of intermediate steps in \cite{AH} while generalizing them, while other times we opt for a non-technical description of steps which are essentially identical here. Armed with this big picture, the interested reader can then refer to \cite{AH} for more details. We include several remarks to put the wide array of results and techniques in context, as well as to provide intuition.

This paper is organized as follows. In Section \ref{two bases} we simultaneously generalize and streamline the argument in \cite{AH} as to apply to a wider range of cases, including the general two bases case. The next section, Section \ref{finite bases}, begins with a connection to \emph{far numbers}, indicating that our general construction in the previous section does not generalize.  From there we outline our new construction to get Theorem \ref{finite bases theorem}. Finally, we outline partial progress toward the infinite base case and the obstacles to overcome; as well as briefly discuss the relationship with this line of work and the \emph{Hasse principle}. The final section, Section \ref{applications}, contains all the applications to weight and function classes, proving Theorem \ref{application theorem}.

\subsection{Acknowledgements}
This research, and authors Anderson, Markman, Pollard and Zeitlin, were partially supported by NSF DMS-2231990 and NSF CAREER  DMS-2237937 (T.C. Anderson). In addition, Markman, Pollard and Zeitlin benefited from funding from Carnegie Mellon's SUAMI program.  The authors also thank David Cruz-Uribe for providing several references.

\section{Extension of the Anderson-Hu Construction}
\label{two bases}

 Our main goal in this section will be to extend the Anderson-Hu construction to the general two bases case. We first recall several definitions and set some notation.

\begin{defn} \label{doubling def}
    A \emph{doubling measure} $\mu$ is a measure for which there exists a positive absolute constant $C$ such that for every interval $I \subset \RR$ we have 
    \[\mu(2I) \le C\mu(I),\] where $2I$ is the interval which shares the same midpoint of $I$ and twice the length of $I$. Equivalently, $\mu$ is doubling if and only if there is a positive absolute constant $C$ so that \[\frac{1}{C}\leq \frac{\mu(I_1)}{\mu(I_2)}\leq C\] 
    for any two adjacent intervals $I_1$ and $I_2$ of equal length. 
\end{defn}

\begin{defn} \label{adic def}
For a positive integer $n$, the \emph{standard $n$-adic system} $\mathcal{D}(n)$ is the collection of $n$-adic intervals in $\mathbb{R}$ of the form
\begin{equation} \label{eq1} 
I=\left[ \frac{k-1}{n^m}, \frac{k}{n^m} \right), \quad m, k \in \mathbb{Z}. 
\end{equation}
For an $n$-adic interval $I$ defined as in (\ref{eq1}), the \emph{$n$-adic children} of $I$ are given by
\[ 
I_j=\left[ \frac{k-1}{n^m}+\frac{j-1}{n^{m+1}}, \frac{k-1}{n^m}+\frac{j}{n^{m+1}} \right), \quad 1 \le j \le n. 
\]
We say that $n$-adic intervals $J_1$ and $J_2$ are \emph{$n$-adic siblings} if they are the children of some common $n$-adic interval $I$. Note that an arbitrary pair of $n$-adic intervals are not necessarily $n$-adic siblings.
\end{defn}

\begin{defn} \label{adic doubling def}
    A measure $\mu$ is an \emph{n-adic doubling measure} if there exists a positive absolute constant $C$ so that \[\frac{1}{C}\leq \frac{\mu(J_1)}{\mu(J_2)}\leq C\]
    for any two $n$-adic siblings $J_1$ and $J_2$. 
\end{defn}
It should be noted that any doubling measure is automatically $n$-adic doubling for every $n \in \NN$. Furthermore, it can be quickly checked that a measure is $n$-adically doubling if and only if it is $n^k$-adically doubling for every positive integer $k$. Further details on doubling and related concepts can be found in \cite{AH}. \\

Finally, we set some notation. Given an $n$-adic interval $I$ as in (\ref{eq1}), we write $Y(I)$ be the right endpoint of $I_1$, that is
\begin{equation} \label{yidef}
Y(I)=\frac{k-1}{n^m}+\frac{1}{n^{m+1}},
\end{equation}
and $Z(I)$ be the left endpoint of $I_n$, that is 
\begin{equation} \label{zidef}
Z(I)=\frac{k-1}{n^m}+\frac{n-1}{n^{m+1}}. 
\end{equation}
as illustrated in Figure \ref{notation}.

\begin{figure}[ht]
\begin{tikzpicture}[scale=8.5]
\label{notation}
\draw (0,0) -- (1,0); 
\draw (0, -0.02) -- (0, 0.02);
\draw (0.15, -0.02) -- (0.15, 0.02);
\draw (0.3, -0.02) -- (0.3, 0.02);
\draw (0.7, -0.02) -- (0.7, 0.02);
\draw (0.85, -0.02) -- (0.85, 0.02);
\draw (1, -0.02) -- (1, 0.02);
\fill (0, -0.015) node [below] {$\frac{k-1}{n^m}$};
\fill (1, -0.015) node [below] {$\frac{k}{n^m}$};
\fill (0.075, 0.005) node [above] {$I_1$}; 
\fill (0.225, 0.005) node [above] {$I_2$}; 
\fill (0.5, 0.005) node [above] {$\cdots$}; 
\fill (0.925, 0.005) node [above] {$I_n$}; 
\fill (0.15,0) circle [radius=.2pt];
\fill (0.85,0) circle [radius=.2pt];
\fill (0.15, -0.02) node [below] {$Y(I)$};
\fill (0.85, -0.015) node [below] {$Z(I)$};
\end{tikzpicture}
\caption{Illustration of distinguished points $Y(I)$ and $Z(I)$}
\end{figure}
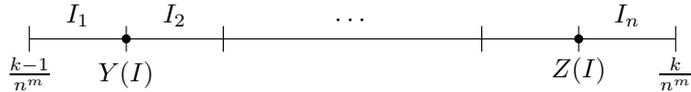

Our main result of this section extends the Anderson-Hu construction to show the following.

\begin{thm}
    \label{two bases theorem}
    There exists an infinite family of measures that are both $u$-adic and $v$-adic doubling for any pair of positive integers $u$ and $v$, but not doubling overall.
\end{thm}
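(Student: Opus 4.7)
The plan is to adapt the reweighting strategy of Anderson-Hu, starting from Lebesgue measure $dx$ and introducing successive multiplicative perturbations supported on carefully chosen small intervals, but using the observation highlighted in the introduction: on the $v$-adic side we may replace the single distinguished point $Y(J)$ from \cite{AH} by a \emph{revolving} point $\zeta$ drawn from the finite collection of $v$-adic subdivision endpoints inside $J$ at a suitable scale. This added flexibility is exactly what lets the construction go through when $u$ and $v$ are not coprime, since the Bezout-type step used in \cite{AH, ATV} is no longer available.

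First I would prove the main number-theoretic / geometric lemma: for every $\varepsilon > 0$ and every sufficiently deep $u$-adic interval $I$, there exist a $v$-adic interval $J$ and a point $\zeta \in J$ that is an endpoint of some $v$-adic child of $J$ at a controlled resolution such that
\[ |\zeta - Z(I)| < \varepsilon\,|I|. \]
The idea is to fix the depth of $I$, look at the intersection pattern of $v^{-m'}\ZZ$ with a small neighborhood of $Z(I)$, and exploit the freedom to choose \emph{which} subdivision point of $J$ serves as $\zeta$ in order to compensate for the loss of coprimality. This should be done by tracking how the common prime factors of $u$ and $v$ propagate through powers and by using a counting argument on residues modulo powers of $u$. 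I expect this density statement to be the main obstacle, since it is where the new idea has to replace the coprime arithmetic of \cite{AH, ATV}.

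Next I would set up the reweighting. Fix a sequence $\varepsilon_k \to 0$ and, recursively, pick well-separated $u$-adic intervals $I_k$ with matched $v$-adic intervals $J_k$ and distinguished points $\zeta_k$ satisfying the closeness criterion at scale $\varepsilon_k$. Define $d\mu = w(x)\,dx$ where $w$ is a product of factors of the form $1 + a_k \mathbf{1}_{A_k} - b_k \mathbf{1}_{B_k}$, with $A_k, B_k$ tiny intervals straddling $Z(I_k)$ and $\zeta_k$ respectively, and with $(a_k, b_k)$ chosen so that total mass is essentially preserved on each relevant $u$-adic and $v$-adic parent but there is a dramatic asymmetry across the midpoint $Z(I_k)$ itself. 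Varying the sequence $(a_k, b_k, \varepsilon_k)$ within admissible ranges produces the advertised infinite family of examples.

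Finally I would verify the three required properties. For $u$-adic doubling I would show that any pair of $u$-adic siblings either (i) lies outside all perturbation supports, giving ratio $1$, (ii) lies jointly inside a single $A_k$ or $B_k$, where the multiplicative factor is identical on both siblings, or (iii) straddles the boundary of some $A_k$ or $B_k$, in which case the $\varepsilon_k$-closeness of $\zeta_k$ and $Z(I_k)$ to the chosen $u$-adic subdivision points forces the two sibling masses to remain comparable with a constant independent of $k$. The $v$-adic doubling is symmetric, but this is precisely where the revolving $\zeta_k$, rather than a fixed $Y(J_k)$, is needed to control the siblings of $J_k$. Failure of overall doubling is then demonstrated by exhibiting, for each $k$, two adjacent (non-$u$-adic, non-$v$-adic) intervals of equal length on either side of $Z(I_k)$ whose $\mu$-ratio grows without bound as $k \to \infty$.
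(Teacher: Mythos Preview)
Your interval-selection idea is essentially correct and matches the paper: the revolving distinguished point $\zeta$ on the larger-base interval, in place of the fixed $Y(J)$, is exactly the new ingredient. (One point of divergence: the paper does not try to handle common prime factors directly via a new density argument. Instead it reduces to a coprime situation by fixing a prime $p$ dividing one base, replacing that base by a power $u=p^n$, and keeping the other base $v$ coprime to $p$; the number theory of \cite{AH} is then generalized verbatim with $\phi(u)$ in place of $p-1$. The revolving $\zeta$ is what allows the resulting $u$-adic point to serve simultaneously as a child-endpoint of the relevant $uc'$-adic intervals. Your ``tracking how the common prime factors propagate'' is vague and would need to become something concrete like this.)

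The real gap is the reweighting. A single multiplicative factor $1+a_k\mathbf{1}_{A_k}-b_k\mathbf{1}_{B_k}$ per stage $k$ cannot do the job. If the $a_k,b_k$ stay bounded, then $w$ is bounded above and below by absolute constants on each $I_k$, and the resulting measure is globally doubling; there is nothing to exhibit in your last step. If instead $a_k\to\infty$ (or $b_k\to 1$), then already the $u$-adic sibling that contains $A_k$ and the sibling that does not have $\mu$-ratio of order $a_k$, destroying $u$-adic doubling. Your case (iii) in the verification (``the $\varepsilon_k$-closeness \dots forces the two sibling masses to remain comparable'') is simply false for this weight: closeness of $\zeta_k$ to $Z(I_k)$ controls \emph{where} the bumps sit, not their \emph{height}.

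What the paper actually does---and what you must do---is keep the Anderson--Hu reweighting \emph{unchanged}: an iterative $2\alpha$-step redistribution on the $v$-adic children of $I^{(\alpha)}$, with weights $a,b$ satisfying $a(v-1)+b=v$, first pushing mass toward $Z(I^{(\alpha)})$ for $\alpha$ steps and then reversing for another $\alpha$ steps. The point is that at every step the ratio between $v$-adic siblings is only $a/b$ or $b/a$, so $v$-adic doubling holds with a fixed constant; yet after $\alpha$ steps the two adjacent (non-sibling) intervals $H^{(\alpha)},G^{(\alpha)}$ flanking $Z(I^{(\alpha)})$ carry masses in ratio $(b/a)^\alpha$, which breaks global doubling. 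The reverse $\alpha$ steps are not cosmetic: they are what makes the exhaustion argument for $u$-adic doubling go through once $|\zeta-Z(I)|<\varepsilon|I|$. None of this structure is present in your proposed $w$, and without it the argument collapses.
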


 Our key insight in proving Theorem \ref{two bases} is a new interval selection procedure to that in \cite{AH}, where the intervals that are selected will be given a modified measure, with all other intervals given Lebesgue measure.  The modified measure is an iterative reweighting, which is unchanged from the construction of Anderson-Hu (having its genesis in \cite{BMW}).  Therefore, before describing our new contributions, we begin with a description of the re-weighting procedure from \cite{AH}, in nontechnical terms, so that the reader can build intuition without having to read \cite{AH} and also convince themselves that this procedure will give the desired results on our selected intervals.
 
\subsection{Description of the Anderson-Hu construction}
\label{measure construction}
In \cite{AH}, the authors construct a measure which is $p$-adic and $q$-adic doubling for distinct primes $p$ and $q$ but not doubling overall. To do this, they select an infinite family of $q$-adic intervals $I^{\alpha}$ indexed by parameters $\alpha\in\NN$. Then they re-weight the Lebesgue measure on the $q$-adic children of $I^\alpha$ to the right and left of the distinguished point $Z(I^\alpha)$, as defined in (\ref{zidef}). \\

We give a description of the reweighting procedure here. Choose any $a<1$ and $b>1$ satisfying $a(q-1)+b=q$. We reweight each interval $I^{\alpha}$ iteratively in $2 \alpha$ steps as follows. For convenience, set $I=I^{\alpha}$. First, redistribute the Lebesgue measure on the $q$ children of $I$ by setting
\[\mu(I_q)=b |I_q|=\frac{b|I|}{q} \text{ and } \mu(I_j)=a |I_j|=\frac{a|I|}{q}\]
for $j=1, \dots, q-1$. Next, set $H^{(1)}: =I_{q-1}$ and $G^{(1)}:=I_q$ and redistribute the measure to the children of $H^{(1)}$ and $G^{(1)}$ as in the previous step. That is, set
\[\mu(H^{(1)}_q)=\frac{b \mu(H^{(1)})}{q} \text{ and }
\mu(H^{(1)}_j)=\frac{a \mu(H^{(1)})}{q}\]
and similarly 
\[\mu(G^{(1)}_q)=\frac{b \mu(G^{(1)})}{q}\text{ and }
\mu(G^{(1)}_j)=\frac{a \mu(G^{(1)})}{q}\]
for $j=1, \dots, q-1$. Next, set $H^{(2)}:=H^{(1)}_q$ and $G^{(2)}:=G^{(1)}_1$ and repeat the reweighting procedure on $H^{(2)}$ and $G^{(2)}$ from the previous step. We then iterate in this way for $\alpha$ steps. That is, we set
\[\mu(H^{(k)}_q)=\frac{b \mu(H^{(k)})}{q} \text{ and }
\mu(H^{(k)}_j)=\frac{a \mu(H^{(k)})}{q}\]
\[\mu(G^{(k)}_q)=\frac{b \mu(G^{(k)})}{q}\text{ and }
\mu(G^{(1)}_j)=\frac{a \mu(G^{(k)})}{q}\]
for $j=1, \dots, q-1$ and then define $H^{(k+1)}:=H^{(k)}_q$ and $G^{(k+1)}:=G^{(k)}_1$ 
for $k=1, \dots, \alpha$
as illustrated in the figure below

\begin{figure}[ht]
\begin{tikzpicture}[scale=5.1]
\draw (-.8,0) -- (1.5,0); 
\fill (1.5,0) circle [radius=.2pt];
\fill (1.5, -0.01) node [right] {{\footnotesize $\textZeta$}};
\fill (1.3,0) circle [radius=.2pt];
\fill (1,0) circle [radius=.2pt];
\fill (.4,0) circle [radius=.2pt];
\fill (-.6,0) circle [radius=.2pt];
\draw [decorate,decoration={brace,amplitude=8pt,raise=10pt},yshift=1.5pt] (1.3,0) -- (1.5,0) node [black,midway,xshift=0cm, yshift=.9cm] { {\tiny $H^{(2\alpha)}$}};
\draw [decorate,decoration={brace,amplitude=8pt,raise=10pt},yshift=1.5pt] (1.0,0) -- (1.3,0) node [black,midway,xshift=0cm, yshift=.9cm] { {\tiny $E^{(2\alpha-1)}$}};
\draw [decorate,decoration={brace,amplitude=8pt,raise=10pt},yshift=1.5pt] (.4,0) -- (1.0,0) node [black,midway,xshift=0cm, yshift=.9cm] { {\tiny $E^{(2\alpha-2)}$}};
\draw [decorate,decoration={brace,amplitude=8pt,raise=10pt},yshift=1.5pt] (-.6,0) -- (0.4,0) node [black,midway,xshift=0cm, yshift=.9cm] { {\tiny $E^{(2\alpha-3)}$}};
\fill (1.4, -.01) node [below] {{\color{blue}{\tiny $1$}}}; 
\fill (1.15, -.01) node [below] {{\color{blue}{\tiny $q-1$}}};
\fill (.7, -.01) node [below] {{{\color{blue}\tiny $q(q-1)$}}};
\fill (-.1, -.01) node [below] {{{\color{blue} \tiny $q^2(q-1)$}}};
\fill (1.4, .01) node [above] {{{\color{red} \tiny $a^\alpha b^\alpha$}}}; 
\fill (1.15, .01) node [above] {{{\color{red} \tiny $a^{\alpha+1} b^{\alpha-1}$}}};
\fill (.7, .01) node [above] {{{\color{red} \tiny $a^{\alpha+1} b^{\alpha-2}$}}};
\fill (-.1, .01) node [above] {{{\color{red}\tiny $a^{\alpha+1} b^{\alpha-3}$}}};
\draw (-.8,-.6) -- (1.5, -.6); 
\fill (1.5,-.6) circle [radius=.2pt];
\fill (1.5, -.61) node [right] {{\footnotesize $\textZeta$}};
\fill[black] (1.2,-.6) circle [radius=.4pt];
\fill (1.45, -.6) circle [radius=.2pt];
\draw [decorate,decoration={brace,amplitude=8pt,raise=10pt},yshift=4pt] (1.2,-.6) -- (1.5,-.6) node [black,midway,xshift=0cm, yshift=.9cm] { {\tiny $H^{(\alpha)}$}};
\draw [decorate,decoration={brace,amplitude=5pt,raise=10pt},yshift=1.5pt] (1.2,-.6) -- (1.45,-.6) node [black,midway,xshift=0cm, yshift=.7cm] { {\tiny $E^{(\alpha)}$}};
\fill (.4, -.6) circle [radius=.2pt]; 
\draw [decorate,decoration={brace,amplitude=8pt,raise=10pt},yshift=4pt] (.4,-.6) -- (1.2,-.6) node [black,midway,xshift=0cm, yshift=.9cm] { {\tiny $E^{(\alpha-1)}$}};
\fill (1.33, -.59) node [above] {{\color{red}\tiny $a^{\alpha+1}$}};
\fill (0.65, -.6) circle [radius=.2pt]; 
\fill (.55, -.61) node [below] {{\color{blue}\tiny $q^\alpha$}};
\fill (.925, -.61) node [below] {{\color{blue} \tiny $q^\alpha(q-2)$}}; 
\draw [decorate,decoration={brace,amplitude=8pt,raise=10pt},yshift=-3pt] (1.2,-.6) -- (.4,-.6) node [black,midway,xshift=0cm, yshift=-.9cm] {{{\color{blue}\tiny $q^\alpha(q-1)$}}};
\draw [decorate,decoration={brace,amplitude=8pt,raise=10pt},yshift=-3pt] (1.45,-.6) -- (1.2,-.6) node [black,midway,xshift=0cm, yshift=-.9cm] {{{\color{blue}\tiny $q^{\alpha-1}(q-1)$}}};
\fill (.55, -.59) node [above] {{\color{red}\tiny $ba^{\alpha-1}$}}; 
\fill (.925, -.59) node [above] {{\color{red} \tiny $a^\alpha$}}; 
\fill (-.7, -.6) circle [radius=.2pt]; 
\fill (-.4, -.6) circle [radius=.2pt]; 
\fill (-.55, -.61) node [below] {{\color{blue}\tiny $q^{\alpha+1}$}};
\fill (0, -.61) node [below] {{\color{blue}\tiny $q^{\alpha+1}(q-2)$}};
\draw [decorate,decoration={brace,amplitude=8pt,raise=10pt},yshift=-3pt] (.4,-.6) -- (-.7,-.6) node [black,midway,xshift=0cm, yshift=-.9cm] { {{\color{blue}\tiny $q^{\alpha+1}(q-1)$}}};
\draw [decorate,decoration={brace,amplitude=8pt,raise=10pt},yshift=4pt] (-.7,-.6) -- (.4,-.6) node [black,midway,xshift=0cm, yshift=.9cm] { {\tiny $E^{(\alpha-2)}$}};
\fill (-.55, -.59) node [above] {{\color{red}\tiny $ba^{\alpha-2}$}};
\fill (0, -.59) node [above] {{\color{red}\tiny $a^{\alpha-1}$}};
\end{tikzpicture}
\caption{Illustrated is $\mu$ on the left hand side of $\textZeta$.  Weightings are listed in red and lengths in blue.}
\label{20200811Fig01}
\end{figure}
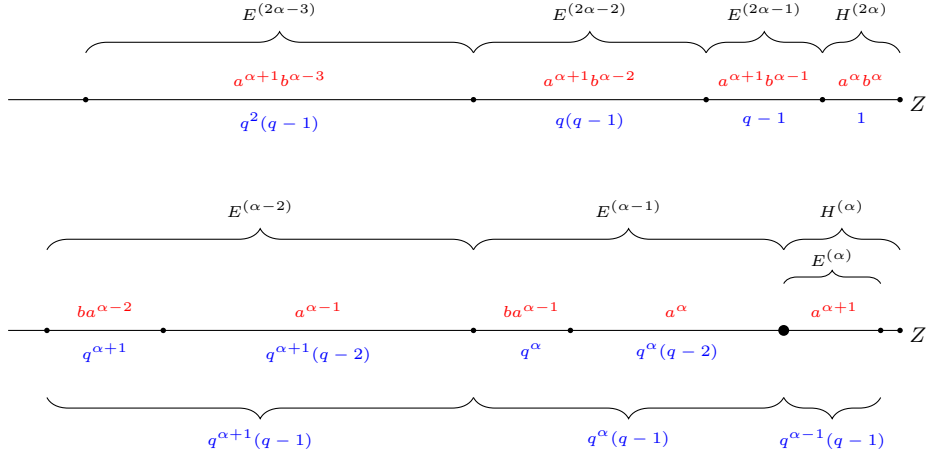

Observe that the first $\alpha$ steps of this procedure gives 
\[\mu(H^{(\alpha)})=\frac{a^\alpha |I|}{q^\alpha} \text { and }
\mu(G^{(\alpha)})=\frac{b^\alpha |I|}{q^\alpha},\]
as calculated in \cite{AH}. Note further that the intervals $H^{(\alpha)}$ and $G^{(\alpha)}$ are adjacent and of equal length. Hence, if we perform this process on infinitely many intervals $I^\alpha$ indexed by infinitely many positive integers $\alpha$, we see that there is no universal constant bounding the ratio of $\mu(G^{(\alpha)})$ and $\mu(H^{(\alpha)})$ for all $\alpha$, thus breaking the doubling property as defined in \ref{doubling def}. However, re-weighting the $q$-adic intervals in this way preserves $q$-adic doubling as the ratio of the measures of any $q$-adic siblings is either $1,$ $\frac{a}{b}$ or $\frac{b}{a}$. Note here that $G^{(\alpha)}$ and $H^{(\alpha)}$ are not $q$-adic siblings, but they are adjacent $q$-adic intervals of equal length.

After $\alpha$ steps, we now have a measure which is $q$-adic doubling, but not doubling overall. To allow for a measure which is simultaneously $p$-adic doubling, we reverse the above procedure on each interval $I^\alpha$ for an additional $\alpha$ steps. That is, for $k = \alpha+1, \dots, 2\alpha$ we set
\[\mu(H^{(k)}_q)=\frac{a \mu(H^{(k)})}{q}\text{ and }
\mu(H^{(k)}_j)=\frac{b \mu(H^{(k)})}{q}\] 
\[\mu(G^{(k)}_q)=\frac{b \mu(G^{(k-1)})}{q}\text{ and }
\mu(G^{(k)}_j)=\frac{a \mu(G^{(k)})}{q}.\]

Observe that these additional $\alpha$ steps do not have any effect on the $q$-adic doubling (as the ratio of $q$-adic intervals are still either $1$, $\frac{a}{b}$ or $\frac{b}{a}$) or the failure of doubling overall (since the measures of $H^{(\alpha)}$ and $G^{(\alpha)}$ are unchanged) but it does help to guarantee $p$-adic doubling. In \cite{AH}, the authors select intervals $I^\alpha$ so that the difference between $Y(J^\alpha)$ and $Z(I^\alpha)$ is suitably bounded, where $J^\alpha$ is the smallest $p$-adic interval containing $I^\alpha$. The reverse weighting procedure from steps $\alpha+1$ to $2\alpha$ guarantees $p$-adic doubling due to the \emph{exhaustion procedure} in \cite{AH}, where the closeness of $Y(J^\alpha)$ and $Z(I^\alpha)$ is key: essentially on these scales the $p$-adic interval must not span too many $q$-adic children with differing weights.

The existence of a family of intervals $I^\alpha$ on $[0, 1]$ so that the difference between $Y(J^\alpha)$ and $Z(I^\alpha)$ is suitably bounded, and thus the existence of the desired measure, is guaranteed by the following Proposition. 

\begin{prop}[Proposition 3.5 in \cite{AH}]
    Given any interval $\widetilde{J}\subseteq [0,1]$ and any $\varepsilon>0,$ there exists a $q$-adic interval $I\subset \widetilde{J}$ such that \[0<Y(J)-Z(I)\leq \varepsilon|I|\] where $J$ is the smallest $p$-adic interval containing $I.$
\end{prop}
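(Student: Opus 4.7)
The plan is to convert the geometric inequality into a B\'ezout/CRT problem on integer parameters, and then use equidistribution to place the resulting configuration inside $\widetilde J$.

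Writing $Y(J) = N/p^{m+1}$ with $N = (k-1)p + 1 \equiv 1 \pmod p$ and $Z(I) = M/q^{m'+1}$ with $M = k'q - 1 \equiv -1 \pmod q$, clearing denominators converts the inequality $0 < Y(J) - Z(I) \le \varepsilon/q^{m'}$ into
\[
0 < t \le \varepsilon\, p^{m+1}\, q, \qquad t := N q^{m'+1} - M p^{m+1}.
\]
Thus the task reduces to finding integers $m, m', N, M$ so that $t$ is a positive integer at most $\varepsilon p^{m+1} q$ and $N/p^{m+1} \in \widetilde J$; the inclusion $I \subset \widetilde J$ follows automatically once $|I| = 1/q^{m'}$ is small enough and $Y(J)$ lies in the interior of $\widetilde J$.

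First, I would pin down the admissible $t$. Reducing the defining equation modulo $p$ and modulo $q$ (using the congruences on $N$ and $M$) yields $t \equiv q^{m'+1} \pmod p$ and $t \equiv p^{m+1} \pmod q$. By the Chinese Remainder Theorem (using $\gcd(p,q)=1$), these pin $t$ down modulo $pq$, so the admissible $t$'s form an arithmetic progression of step $pq$ in $[1, \varepsilon p^{m+1} q]$ with count $\lfloor \varepsilon p^m \rfloor$ -- positive once $m \ge \log_p(1/\varepsilon)$. For each such $t$, B\'ezout (using $\gcd(p^{m+1}, q^{m'+1})=1$) produces $(N,M)$ unique modulo $(p^{m+1}, q^{m'+1})$; the canonical representative places $N \in [1, p^{m+1}-p+1]$, and taking $m'$ large enough that $q^{m'+1} > p^{m+1}$ places $M \in [q-1, q^{m'+1}-1]$.

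Second, I would enforce $N/p^{m+1} \in \widetilde J$. As $t$ ranges over its AP of step $pq$, the residue $N \bmod p^{m+1}$ traces an arithmetic progression of step $pq\cdot(q^{m'+1})^{-1} \equiv pu \pmod{p^{m+1}}$ for some unit $u \in (\ZZ/p^m\ZZ)^*$ depending on $m'$; dividing by $p^{m+1}$ gives $\lfloor \varepsilon p^m\rfloor$ terms of a Kronecker-type progression in $[0, 1)$ with step $u/p^m$. By the three-distance theorem, provided $m$ is large enough that $\varepsilon\, |\widetilde J|\, p^m$ exceeds the discrepancy error, some term of this progression lies in $\widetilde J$. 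For this $N$, set $J$ to be the $p$-adic interval at level $m$ with $Y(J) = N/p^{m+1}$ and $I$ the $q$-adic interval at level $m'$ with $Z(I) = M/q^{m'+1}$; the interval $I$ crosses the interior $p$-adic boundary $Y(J)$, so $J$ is the smallest $p$-adic interval containing $I$, and the desired inequality follows by construction.

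The main obstacle is the equidistribution step: the Kronecker progression of positions $N/p^{m+1}$ might cluster away from $\widetilde J$ if the effective step $u/p^m$ happens to be a very simple rational (e.g.\ if $u$ is close to $0$ or to $p^m$). The remedy is to exploit the irrationality of $\log_p q$ (guaranteed by $p, q$ being distinct primes) via Weyl equidistribution of $\{m' \log_q p\}_{m'} \bmod 1$, choosing $m'$ so that $u = q^{-m'} \bmod p^m$ is well-distributed and the Kronecker progression is nearly uniform. Taking $m$ sufficiently large (depending on $\varepsilon$ and $|\widetilde J|$) then produces the desired $(m, m', N, M)$, completing the proof.
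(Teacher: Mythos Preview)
Your reduction to the B\'ezout identity $Nq^{m'+1}-Mp^{m+1}=t$ with $0<t\le\varepsilon p^{m+1}q$ is correct, and the congruence bookkeeping and containment checks are fine. The gap is exactly where you flag it, and your proposed remedy does not close it. With step $u=q^{-m'}\bmod p^{m}$ and only $\lfloor\varepsilon p^{m}\rfloor$ terms, the progression $\{ju/p^{m}\bmod 1\}$ can cluster badly: if $u$ is small (for instance $u=1$, which occurs whenever $m'$ is a multiple of the order of $q$ in $(\ZZ/p^{m}\ZZ)^{*}$), all terms lie in $[0,\varepsilon)$ and miss $\widetilde J$ entirely. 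The three-distance theorem gives no control on the \emph{largest} gap without Diophantine information on $u/p^{m}$. Your fix invokes Weyl equidistribution of $\{m'\log_{q}p\}$, but that governs the \emph{Archimedean} size of $q^{m'}$ relative to powers of $p$, not the residue $q^{-m'}\bmod p^{m}$; there is no mechanism linking the two, so as written the placement of $Y(J)$ inside $\widetilde J$ is unjustified.

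The paper (following Anderson--Hu) avoids this by decoupling the two constraints in the opposite order. Instead of varying $t$, it fixes $t=1$ exactly and proves, via a computation of the order of $q^{p-1}$ in $(\ZZ/p^{m_{1}(q-1)}\ZZ)^{*}$ (their Propositions~2.1 and~2.3), that $kq^{m_{2}(p-1)}-jp^{m_{1}(q-1)}=1$ is solvable in $(m_{2},j)$ for \emph{every} $k\equiv1\pmod{p^{C(p,q)+1}}$, where $C(p,q)$ is a fixed constant. Since the modulus $p^{C(p,q)+1}$ is independent of $m_{1}$, the admissible values $k/p^{m_{1}(q-1)}$ form a progression of fixed density $p^{-C(p,q)-1}$ in $[0,1]$, and taking $m_{1}$ large places one of them in $\widetilde J$ by pigeonhole; the closeness bound then falls out of $t=1$ for free. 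In short: they use the group structure of $(\ZZ/p^{m}\ZZ)^{*}$ to pin $t=1$ and obtain location from density of a \emph{fixed-modulus} arithmetic progression, whereas you attempt to obtain location from equidistribution of a \emph{variable-step} progression whose step you cannot control.
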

 The existence of such a pair of intervals in Proposition 3.5 relies on the number theory explored in \cite{AH} which guarantees that given some sufficiently nice $p$-adic point of the form $\frac{k}{p^a}$ there exists an infinite sequence of points $\frac{j}{q^b}$ such that \[\frac{k}{p^a}-\frac{j}{q^b}=\frac{1}{p^aq^b}.\]
 If we let $J=\left[\frac{k-1}{p^a},\frac{k-1+p}{p^a}\right)$ be a $p$-adic interval then we can choose a large enough $b$ such that \[I=\left[\frac{j-q+1}{q^b},\frac{j+1}{q^b}\right)\subseteq J\] and $J$ is the largest $p$-adic interval containing $I.$  This framework was extended in \cite{ATV} to allow for such a sequence of $m$ and $n$-adic points with $\gcd(m,n)=1$.  Extending and simplifying this framework is thus our starting point.  
 
\subsection{Extending the construction}

Before proving our main result of this section, we first show how a simple generalization of \cite{AH} can extend the construction to the $q$ and $q^np^m$ bases case, where $p,q$ 
are distinct primes. Beginning here allows us to recall the techniques of \cite{AH} which we will alter to prove Theorem \ref{two bases theorem}. Due to the discussion above, the existence of a measure which is $q$-adic and $q^np^m$-adic doubling, for distinct primes $q$ and $p$, follows immediately from the construction of the measure outlined in Subsection \ref{measure construction} on the selected intervals $I^{(\alpha)}$ and the following proposition.
\begin{prop}
    Let $p$ and $q$ be primes with $p>q$. Given any interval $\Tilde{J} \subset [0,1]$ and any $\varepsilon > 0$, there exists a $q$-adic interval $I \subset \Tilde{J}$ such that $0<Y(J)-Z(I)\leq \varepsilon|I|$, where $J$ is the smallest $p^mq^n$-adic interval that contains $I$, for any fixed positive integers $m$ and $n$.
\end{prop}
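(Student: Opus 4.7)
The plan is to mimic \cite{AH}'s Proposition 3.5 by finding a distinguished $p^m q^n$-adic point $Y(J)$ and a nearby distinguished $q$-adic point $Z(I)$, but with a more delicate Bezout-style analysis adapted to the composite base. I parameterize $Y(J) = k/(p^m q^n)^s$ with $k \equiv 1 \pmod{p^m q^n}$ and $Z(I) = j/q^b$ with $j \equiv -1 \pmod q$, so that for $b \ge ns$ we have $Y(J) - Z(I) = N/(p^{ms} q^b)$ where $N := kq^{b-ns} - jp^{ms}$. Writing $k = 1 + A p^m q^n$ and $j = -1 + B q$, expansion yields $N = q^{b-ns} + p^{ms} + A p^m q^{n+b-ns} - B q p^{ms}$; since $\gcd(p^m q^{n+b-ns}, q p^{ms}) = p^m q$, the last two terms sweep out the subgroup $p^m q \cdot \mathbb{Z}$ as $A, B$ range, so $N$ ranges over the arithmetic progression $q^{b-ns} + p^{ms} + p^m q \mathbb{Z}$ and the smallest positive achievable $N$ is at most $p^m q$.

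Because $|I| = q/q^b$, this bound immediately gives
\[
\frac{Y(J) - Z(I)}{|I|} \;=\; \frac{N}{p^{ms} q} \;\le\; \frac{1}{p^{m(s-1)}}.
\]
I therefore choose $s$ large enough that $p^{m(s-1)} > 1/\varepsilon$ (securing the ratio bound) and $q^{n(s-1)} > 2/|\widetilde{J}|$ (ensuring the candidate points below are dense in $\widetilde J$). Fixing such an $s$, pick $b > ns$ sufficiently large and let $N$ be the smallest positive element of the coset above. All integer solutions $(k,j)$ to $kq^{b-ns} - jp^{ms} = N$ satisfying the residue constraints form a single $\mathbb{Z}$-family $(k_0 + \mu p^{ms} q^n,\; j_0 + \mu q^{b-ns+n})$, so the associated distinguished points $P = k/(p^m q^n)^s$ are spaced $1/q^{n(s-1)}$ apart; by our choice of $s$ one such $P$ lies safely inside $\widetilde{J}$.

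Setting $J = [(k-1)/(p^m q^n)^s, (k-1+p^m q^n)/(p^m q^n)^s)$ and $I = [(j-q+1)/q^b, (j+1)/q^b)$, I verify the geometric conditions. Enlarging $b$ so that $q^{b-ns} \ge (q-1) p^{ms} + p^m q$ gives $(j-q+1)/q^b \ge (k-1)/(p^m q^n)^s$, which is the left-endpoint condition for $I \subset J$. For $s$ large enough that $N < p^{ms}$, one has $(j+1)/q^b > Y(J)$, so $I$ straddles the boundary between the first two $p^m q^n$-adic children of $J$; this forces $J$ to be the smallest $p^m q^n$-adic interval containing $I$. Enlarging $b$ further shrinks $|I| = 1/q^{b-1}$ so that $I \subset \widetilde{J}$.

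The main obstacle is the residue analysis in the first step: unlike the coprime-primes setting of \cite{AH}, where $N = 1$ is delivered by a single Bezout identity, the simultaneous divisibility requirements $k - 1 \equiv 0 \pmod{p^m q^n}$ and $j + 1 \equiv 0 \pmod q$ preclude $N = 1$ in general. The coset computation salvages the construction by pinning $N \le p^m q$, which is just strong enough once $s$ is taken large enough to absorb this factor via $p^{m(s-1)}$; the rest of the argument is a careful but routine adaptation of the size comparisons in \cite{AH}.
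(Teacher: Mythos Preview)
Your argument is correct and takes a genuinely different route from the paper's. Both proofs set $Y(J)=k/(p^mq^n)^s$ with $k\equiv 1\pmod{p^mq^n}$ and $Z(I)=j/q^b$ with $j\equiv -1\pmod q$, so that $Y(J)-Z(I)=N/(p^{ms}q^b)$ with $N=kq^{b-ns}-jp^{ms}$. The paper does \emph{not} settle for a bounded $N$: it invokes Proposition~2.3 of \cite{AH} (which in turn rests on the order computation of Proposition~2.1) to force the exact identity $N=1$, by taking $ms=m_1(q-1)$, $b-ns=m_2(p-1)$, and $k\equiv 1\pmod{(p^mq^n)^{C(p,q)+1}}$. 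Your coset calculation instead shows $N$ can be taken in $(0,p^mq]$ via nothing more than B\'ezout, and you absorb the extra factor $p^mq$ into $p^{m(s-1)}$ when bounding the ratio. This buys you a fully elementary, self-contained proof that avoids the group-theoretic machinery of \cite{AH}; the paper's route buys the sharper constant $N=1$ and reuses existing infrastructure. One small correction: your remark that the residue constraints ``preclude $N=1$ in general'' is not quite accurate---the paper shows $N=1$ \emph{is} attainable once $b-ns$ and $ms$ are chosen in the right residue classes---but this does not affect your argument, since $N\le p^mq$ is all you use. Also, your presentation enlarges $b$ after selecting $P$, yet $P$ depends on $b$ through $N$; this is harmless because the spacing $1/q^{n(s-1)}$ of the admissible $P$'s is independent of $b$, so one may simply fix $b$ large enough up front and then pick $P$.
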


\begin{proof}
    Fix integers $m, n$, interval $\tilde{J} \subset [0,1]$, and $\varepsilon >0$, and let $J'$ be the largest $q$-adic interval contained in $\tilde{J}$. Say that $J'$ has length $\dfrac{1}{q^{m'}}$. As in \cite{AH} let $m(p, q)$ denote the smallest positive integer satisfying
    \[q^{p-1} \not\equiv 1(\text{mod} \,p^{m(p, q)+1})\]
    and let $O_\ell(p, q)$ denote the order of $q^{p-1}$ in $(\ZZ/p^\ell\ZZ)^\times$ for any nonnegative integer $\ell$.
By Proposition 2.1 of \cite{AH}, when $\ell$ is sufficiently large there exists a constant $C(p, q)$ so that
\[\frac{O_\ell(p, q)}{p^{\ell-1}}=\frac{1}{p^{C(p, q)}}.\]
Choose a large enough integer $m_1$ so that all of the following hold
\[m_1 > \max\left\{\frac{m(p,q)}{q-1},m',\frac{m' + C(p,q) +1}{p-1}\right\},\]
$m \mid m_1$ and $\dfrac{1}{p^{m_1(q-1)}}< \varepsilon$. Next, choose a positive integer $k$ with
\[k \equiv 1 (\bmod (p^mq^n)^{C(p,q)+1})\]
so that for $m_1$ large enough we have $k/(p^mq^n)^s \in J'$ where $s = {m_1(q-1)}/{m}$, and let
\begin{align*}
    J = \left[\frac{k-1}{(p^mq^n)^{s}}, \frac{k-1+p^mq^{n}}{(p^mq^n)^{s}}\right].
\end{align*} Observe that $J$ is a $p^mq^n$-adic interval of length $1/(p^mq^n)^{s-1}$. Indeed, if we write
\[k=\kappa \cdot (p^m q^n)^{C(p, q)+1}+1\] 
for some $\kappa \in \ZZ$, then
\[\frac{k-1}{(p^mq^n)^{s}} = \frac{\kappa \cdot (p^mq^n)^{C(p,q)+1}+1-1}{(p^mq^n)^{s}} = \frac{\kappa\cdot(p^mq^n)^{C(p,q)}}{(p^mq^n)^{s-1}}\] and 

\[\frac{k-1+p^mq^{n}}{(p^mq^n)^{s}} = \frac{\kappa\cdot(p^mq^n)^{C(p,q)+1}+1-1+p^mq^n}{(p^mq^n)^{s}} = \frac{\kappa\cdot(p^mq^n)^{C(p,q)}+1}{(p^mq^n)^{s-1}}\].\\\

Also note that that \[Y(J) = \frac{k}{(p^mq^n)^{s}} \in J'\]
by choice of $k$. Since we chose $p>q$ and $m_1$ large enough, we see that $Y(J)$ is in the interior of $J'$. So, if we choose $m_1$ sufficiently large, we can make the interval $J$ arbitrarily small to ensure that $J \subseteq J'$. By Proposition 2.3 of \cite{AH}, there are infinitely many pairs $m_2, j \in \NN$ such that 
\[\frac{k}{p^{m_1(q-1)}}-\frac{j}{q^{m_2(p-1)}}=\frac{1}{p^{m_1(q-1)}q^{m_2(p-1)}}\]
with $j \equiv -1 (\bmod \, q)$. We now choose such an $m_2$ subject to the additional constraint that 
\[q^{m_2(p-1)}>10q\cdot p^{m_1(q-1)},\] and set \[I = \left[ \frac{j+1-q}{q^{n s +m_2(p-1)}}, \frac{j+1}{q^{n s +m_2(p-1)}}\right).\]  By our choice of $j \equiv -1 (\bmod \,q)$ we can see that $I$ is a $q$-adic interval. Furthermore, we have
\[Z(I)=\frac{j}{q^{ns +m_2(p-1)}}.\]

We are left to show that our intervals $I$ and $J$ satisfy the following conditions. \\

\begin{enumerate}
    \item $Y(J) > Z(I)$ \\
    
    To see this, we have \begin{align*}
        Y(J)-Z(I) &= \frac{1}{q^{n s}}\left( \frac{k}{p^{ms}}-\frac{j}{q^{m_2(p-1)}}\right)\\
        &= \frac{1}{q^{n s}}\left( \frac{k}{p^{m_1(q-1)}}-\frac{j}{q^{m_2(p-1)}}\right)\\
        &= \frac{1}{q^{ns}}\left( \frac{1}{p^{m_1(q-1)}q^{m_2(p-1)}}\right),
    \end{align*}which is greater than $0$ by construction. \\
    \item $I \subset J$ \\

    Let $l(I)$ denote the left most endpoint of $I$. Then we have
    \begin{align*}
        |[l(I),Y(J)]| &= |[l(I),Z(I)]| + |[Z(I),Y(J)]|\\
        &= \frac{q-1}{q^{ns + m_2(p-1)}}+ \frac{1}{p^{m_1(q-1)}q^{m_2(p-1)}}\\
        &=\frac{1}{q^{ns + m_2(p-1)}}\left[q+\left(\frac{1}{p^{m_1(q-1)}} -1\right)\right]\\
        &<\frac{q}{q^{ns + m_2(p-1)}}\\
        &<\frac{q}{10q^{ns+1}p^{ms}}\\
        &=\frac{1}{10(p^mq^n)^{s}} < |[l(J), Y(J)]|=\frac{1}{(p^mq^n)^{s}}
    \end{align*}
    \item $J$ is the smallest $pq$-adic interval containing $I$.\\
    
    Note that $Y(J)$ is again an interior point of $I$, as 
    \[Y(J)-Z(I) = \frac{1}{q^{n s}}\left( \frac{1}{p^{m_1(q-1)}q^{m_2(p-1)}}\right) \leq \frac{1}{q^{n s +m_2(p-1)}} = |[Z(I), r(I)]|,\]
    where $r(I)$ denotes the right endpoint of $I$. Furthermore, all other $pq$-adic intervals with side-length less than or equal to $|J|$ are either disjoint from $Y(J)$ or contain $Y(J)$ as an endpoint.\\
    
    \item $Y(J)-Z(I)< \varepsilon |I|$\\
    
    This follows from our choice of $m_1$, that is:
    \begin{align*}
        Y(J)-Z(I) &= \frac{1}{q^{ns}}\left( \frac{1}{p^{m_1(q-1)}q^{m_2(p-1)}}\right)\\
        &< \frac{1}{q^{ns}}\left( \frac{\varepsilon q}{q^{m_2(p-1)}}\right)\\
        &= \varepsilon |I|.
    \end{align*}
\end{enumerate}
By the same methodology used in \cite{AH} this completes the proof.
\end{proof}

\subsection{Proof of Theorem \ref{two bases theorem}} In order to prove our main result of this section, we take advantage of the insight that the methods used in \cite{AH} to show the original measure is $p$-adic doubling (i.e. the exhaustion procedure) only require \eqref{key closeness criterion} for \emph{some} $Y$ and $Z$.  That is, that the endpoint of some child of a $p$-adic interval can be made arbitrarily close to some ``distinguished point" on the selected $q$-adic intervals.

In \cite{AH}, these distinguished points are chosen to be $Y(J)$ and $Z(I)$. Given $u,v\in \NN$ with $u>v$, in our extensions we will keep $Z(I)$ as the ``distinguished point" on every $v$-adic interval $I$, but replace $Y(J)$ by a ``revolving" interior endpoint.  As may be expected, this makes the initial framework much more amenable to generalization.\\\

We first generalize the number-theoretic framework developed by \cite{AH} to its full power, and afterwards we will detail the changes which must be made to their interval selection procedure in order to complete our extensions.

\begin{prop} (Generalization of Proposition 2.1 in \cite{AH})
\label{AH generalized order}
Let $p$ be prime and $u = p^k$ for an integer $k \geq 1$, and let $v$ be any integer with $\gcd(u,v)=1$. Let further $O_m(u,v)$ denote the order of $v^{\phi(u)}$ in $\left( \ZZ \big/ \left(u^m \ZZ \right) \right)^*$ for each $m \ge 1$, where $\phi(u)$ denotes the Euler-totient function. Then there exists some integer $C(u,v)\geq 0$, such that
$$
 \frac{O_m(u,v)}{u^{m-1}}=\frac{1}{u^{C(u,v)}}. 
$$
when $m$ is sufficiently large. 
\end{prop}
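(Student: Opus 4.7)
The plan is to reduce the computation of $O_m(u, v)$ to the $p$-adic order theory in $(\ZZ/p^{km}\ZZ)^*$ via lifting the exponent, and then show the quotient $O_m/u^{m-1}$ stabilizes to a power of $p$ that is independent of $m$ once $m$ is large.

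I would begin by isolating the key $p$-adic invariant
\[ e := v_p\!\bigl(v^{\phi(u)} - 1\bigr), \]
where $v_p$ denotes the $p$-adic valuation. Because $\gcd(v, u) = 1$ forces $\gcd(v, p) = 1$, Euler's theorem yields $v^{\phi(u)} \equiv 1 \pmod{u}$, so $e \ge k$; crucially, $e$ depends only on $u$ and $v$ and not on $m$. The central step is then the lifting-the-exponent lemma: for odd $p$, if $x \in \ZZ$ satisfies $v_p(x - 1) = e \ge 1$, then the order of $x$ in $(\ZZ/p^n\ZZ)^*$ is precisely $p^{n-e}$ for every $n \ge e$. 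Applying this to $x = v^{\phi(u)}$ with $n = km$ gives
\[ O_m(u,v) = p^{km - e} \qquad \text{whenever } m \ge \lceil e/k \rceil. \]
Since $u^{m-1} = p^{k(m-1)}$, one immediately concludes
\[ \frac{O_m(u,v)}{u^{m-1}} = p^{k - e}, \]
a constant in $m$. Choosing $C(u,v)$ so that $u^{C(u,v)} = p^{e-k}$ yields the claimed form, specializing to Proposition 2.1 of \cite{AH} in the case $k = 1$.

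The main technical obstacle is the prime $p = 2$, where $(\ZZ/2^n\ZZ)^*$ is not cyclic for $n \ge 3$; it splits as $\langle -1 \rangle \times \langle 5 \rangle \cong \ZZ/2\ZZ \times \ZZ/2^{n-2}\ZZ$. The 2-adic version of LTE requires $v^{\phi(u)} \equiv 1 \pmod 4$ rather than merely $\pmod 2$. When $u = 2^k$ with $k \ge 2$, one has $\phi(u) = 2^{k-1}$, and squaring any odd integer gives a residue in $1 + 8\ZZ$, so the hypothesis is automatic; the case $k = 1$ reduces to the statement in \cite{AH} directly. After verifying this compatibility, the analogous order identity on the cyclic factor yields the same stabilization. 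A secondary bookkeeping issue, which I would address via the factorization $e = v_p(v^d - 1) + v_p(\phi(u)/d)$ where $d$ is the multiplicative order of $v$ modulo $p$, is checking that $e - k$ is a nonnegative integer multiple of $k$, so that $C(u, v)$ is a genuine nonnegative base-$u$ exponent rather than only a base-$p$ one.
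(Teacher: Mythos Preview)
Your LTE approach is correct and more streamlined than the paper's. The paper argues by induction on $\ell$, using the binomial expansion to lift the non-congruence $(v^{\phi(u)})^{u^{N_0+\ell-1}} \not\equiv 1 \pmod{u^{m(u,v)+\ell+1}}$ one level at a time; you instead compute $e = v_p(v^{\phi(u)}-1)$ once and read off $O_m = p^{km-e}$ directly. Both routes correctly establish that $O_m(u,v)/u^{m-1}$ stabilizes to the fixed constant $p^{\,k-e}$ for $m$ large.

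The genuine gap is in your final bookkeeping claim. Your own factorization already shows it fails: since $d \mid p-1$ is coprime to $p$, one has $v_p(\phi(u)/d) = k-1$, so $e = v_p(v^d-1) + k - 1$ and hence $e - k = v_p(v^d-1) - 1$, which can be \emph{any} nonnegative integer and is certainly not forced to be a multiple of $k$. Concretely, take $u = 9$ (so $p=3$, $k=2$) and $v = 10$: then $\phi(9)=6$, $10^6-1 = 3^3\cdot 7\cdot 11\cdot 13\cdot 37$, so $e=3$ and $e-k = 1$; direct computation confirms $O_2(9,10)=3$, giving $O_2/u = 1/3$, which is not $1/9^{C}$ for any integer $C\ge 0$. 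This is in fact a defect in the proposition as stated---the stabilized ratio is a power of $p$, not of $u$---and the paper's own proof contains the corresponding slip when it jumps from knowing the order lies strictly between $u^{N_0+\ell-1}$ and $u^{N_0+\ell}$ (as powers of $p$) to asserting it equals $u^{N_0+\ell}$.
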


\begin{proof}
Let $m(u,v)$ be the smallest integer such that
$$
v^{\phi(u)} \not\equiv 1 \ (\textrm{mod} \ u^{m(u,v)+1}). 
$$
This implies that there exists some $N_0 \in \{1, 2, \dots, m(u,v)\}$ such that
\begin{equation} \label{20200726eq01}
\left(v^{\phi(u)} \right)^{u^{N_0}} \equiv 1 \ (\textrm{mod} \ u^{m(u,v)+1}), 
\end{equation}

since by Euler's theorem (applied to $v$), it is always true that
$$
\left(v^{\phi(u)} \right)^{u^{m(u,v)}} \equiv 1 \ (\textrm{mod} \ u^{m(u,v)+1}).
$$  In slightly more detail, recall that if $u=p^k$ then $\phi(u)=p^{k-1}(p-1)$. We see \[u^{N_0}\phi(u)=p^{kN_0+k-1}(p-1),\] and as $p^{km(u,v)+k-1}(p-1)=\phi(p^{km(u,v)+k})$ we see that such an $N_0$ must indeed exist.
Without loss of generality, we assume that the $N_0$ fixed above is the smallest, namely
\begin{equation} \label{20200726eq02}
\left(v^{\phi(u)} \right)^{u^{N_0-1}} \not\equiv 1 \ (\textrm{mod} \ u^{m(u,v)+1}). 
\end{equation}

We claim that for any $\ell \ge 0$, we have
\begin{equation} \label{20200727eq01}
    \left(v^{\phi(u)} \right)^{u^{N_0+\ell-1}} \not\equiv 1 \ (\textrm{mod} \ u^{m(u,v)+\ell+1}). 
\end{equation} 

\medskip

We prove the claim by induction. The case when $\ell=0$ is exactly \eqref{20200726eq02}. Assume \eqref{20200727eq01} holds when $\ell=k$, that is
\begin{equation} \label{20200727eq02}
\left(v^{\phi(u)} \right)^{u^{N_0+k-1}} \not\equiv 1 \ (\textrm{mod} \ u^{m(u,v)+k+1}), 
\end{equation} 
By raising both sides of equation (\eqref{20200726eq01}) to $u^{k-1}$ we have
\begin{equation} \label{20200727eq02'}
\left(v^{\phi(u)} \right)^{u^{N_0+k-1}} \equiv 1 \ (\textrm{mod} \ u^{m(u,v)+k}), 
\end{equation}
which, together with \eqref{20200727eq02}, implies that we can write
$$
\left(v^{\phi(u)} \right)^{u^{N_0+k-1}}=u^{m(u,v)+k} \cdot s+1, 
$$
where $u \nmid s$. 

Taking the $u$-th power on both sides of the above equation, we have
\begin{eqnarray*}
\left(v^{\phi(u)} \right)^{u^{N_0+k}}%
&\equiv& \left(\left(v^{\phi(u)} \right)^{u^{N_0+k-1}} \right)^u \\
&\equiv& \left(u^{m(u,v)+k} \cdot s+1 \right)^u \\
&\equiv& u^{m(u,v)+k+1} \cdot s+1 \\
& \not\equiv& 1 \quad (\textrm{mod} \ u^{m(u,v)+k+2}); 
\end{eqnarray*}
in the last line above, we use the fact that $u \nmid s$.  In particular, we have used that $m(u,v)+k\geq 1$, to show (via the binomial theorem) $2(m(u,v)+k \geq m(u,v)+k +1$. Therefore, \eqref{20200727eq01} is proved.\\\

With this, for any $\ell \geq 0$ we have 
\[(v^{\phi(u)})^{u^{N_0+\ell-1}}\neq 1 (\bmod u^{m(u,v)+\ell +1}),\] 
which implies that 
\[O_{m(u,v)+\ell+1}(u,v)=u^{N_0+\ell}.\] Rewriting $m$ as $m(u,v)+(m-m(u,v)-1)+1=m(u,v)+\ell+1$, as in \cite{AH} this yields 
\[O_m(u,v)=u^{-(m(u,v)-N_0)},\] and setting $C(u,v)=(m(u,v)-N_0)$ gives $u^{-C(u,v)}$, as desired.
\end{proof}

After the next Proposition, we will have the necessary tools to generalize the interval selection procedure, namely, Theorem 3.4 and Proposition 3.5 of \cite{AH}.\\\

\begin{prop}

(Generalization of Proposition 2.3 in \cite{AH})
\label{AH ap} Let $p$ be prime, and $u=p^k$ for an integer $k \geq 1$. Let $v$ be coprime with $C(u,v)$ and $m(u,v)$ be defined as above. Then for any $m_1>\frac{m(u,v)}{\phi(v)}$ and 
\begin{eqnarray} \label{20200727eq04}
k %
&\in& \left\{1, 1+u^{C(u,v)+1}, 1+2u^{C(u,v)+1}, \dots, u^{m_1\phi(v)}-u^{C(u,v)+1}+1 \right\} \nonumber \\
&=& \left\{ x \in \left[1, u^{m_1\phi(v)}\right]: x \equiv 1 \  \left(\textrm{mod} \ u^{C(u,v)+1} \right) \right\}, 
\end{eqnarray}
there exists infinitely many pairs $j$ and $m_2$, where $m_2 \in \NN$, and 
\begin{eqnarray} \label{20200727eq05}
j%
&\in&  \left\{v-1, 2v-1, \dots, v^{m_2\phi(u)}-1 \right\} \nonumber \\
&=& \left\{y \in \left[ 1, v^{m_2\phi(u)} \right]: y \equiv -1 \ (\textrm{mod} \ v) \right\}, 
\end{eqnarray}
such that
\begin{equation} \label{20200727eq06}
\frac{k}{u^{m_1\phi(v)}}-\frac{j}{v^{m_2\phi(u)}}=\frac{1}{u^{m_1\phi(v)}v^{m_2\phi(u)}}.
\end{equation} 
\end{prop}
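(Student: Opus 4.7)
The plan is to recast the Diophantine equation \eqref{20200727eq06} as a modular condition and then invoke Proposition \ref{AH generalized order} to verify solvability. Clearing denominators, \eqref{20200727eq06} is equivalent to
\[ k v^{m_2\phi(u)} - j u^{m_1\phi(v)} = 1.\]
Reducing this identity modulo $v$ and using Euler's theorem $u^{\phi(v)} \equiv 1 \pmod v$ (valid since $\gcd(u,v)=1$), one obtains $-j \equiv 1 \pmod v$, so the residue condition $j \equiv -1 \pmod v$ in \eqref{20200727eq05} is automatic. Thus the task reduces to producing infinitely many $m_2 \in \NN$ with
\[ v^{m_2\phi(u)} \equiv k^{-1} \pmod{u^{m_1\phi(v)}}, \]
where $k$ is a unit modulo $u^{m_1\phi(v)}$ because $k \equiv 1 \pmod{u^{C(u,v)+1}}$ forces $\gcd(k, u) = 1$.

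Next, I would apply the generalized Proposition \ref{AH generalized order}. The hypothesis $m_1 > m(u,v)/\phi(v)$ ensures $m_1\phi(v) > m(u,v)$, placing us in the regime where the order of $v^{\phi(u)}$ in $\left(\ZZ/u^{m_1\phi(v)}\ZZ\right)^*$ is exactly $u^{m_1\phi(v)-1-C(u,v)}$. Denote the cyclic subgroup $\langle v^{\phi(u)}\rangle$ by $H$. Since $v^{\phi(u)} \equiv 1 \pmod{u^{m(u,v)}}$ and $m(u,v) \geq C(u,v)+1$, every element of $H$ lies in the subgroup
\[ K := \left\{x \in \left(\ZZ/u^{m_1\phi(v)}\ZZ\right)^* : x \equiv 1 \pmod{u^{C(u,v)+1}}\right\},\]
so $H \subseteq K$. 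A direct cardinality count gives $|K| = u^{m_1\phi(v)-C(u,v)-1} = |H|$. When $p$ is odd the ambient group $(\ZZ/u^{m_1\phi(v)}\ZZ)^*$ is cyclic and its subgroups are uniquely determined by their order, forcing $H = K$.

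Finally, the hypothesis $k \equiv 1 \pmod{u^{C(u,v)+1}}$ places $k$, and hence $k^{-1}$, into $K = H$. Writing $k^{-1} = (v^{\phi(u)})^{m_2^*}$ for some $m_2^* \in \NN$ yields one solution, and replacing $m_2^*$ by $m_2^* + t \cdot u^{m_1\phi(v)-1-C(u,v)}$ for $t \in \NN$ produces infinitely many; the corresponding $j = (k v^{m_2\phi(u)}-1)/u^{m_1\phi(v)}$ is automatically a positive integer in the range of \eqref{20200727eq05} once $t$ is taken large enough. The main technical obstacle I anticipate is the case $p=2$, where $(\ZZ/2^n\ZZ)^*$ fails to be cyclic for $n \geq 3$; here one must work inside the cyclic factor of the decomposition $(\ZZ/2^n\ZZ)^* \cong \ZZ/2 \times \ZZ/2^{n-2}$, using that for $u = 2^k$ with $k \geq 2$ the exponent $\phi(u) = 2^{k-1}$ is even, so $v^{\phi(u)}$ is a square and automatically lies in the cyclic factor $\{x \equiv 1 \pmod 4\}$, where the cardinality comparison goes through as before.
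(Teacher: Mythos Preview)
Your proposal is correct and follows essentially the same route as the paper: clear denominators, reduce modulo $v$ to obtain the condition on $j$ automatically, reduce modulo $u^{m_1\phi(v)}$, and then show that $\langle v^{\phi(u)}\rangle$ coincides with the subgroup $K$ (which the paper denotes $G_{m_1}(u,v)$) via the containment $H\subseteq K$ together with the cardinality count coming from Proposition~\ref{AH generalized order}. One remark: your detour into the $p=2$ case is unnecessary, since you have already established $H\subseteq K$ and $|H|=|K|$ for finite sets, which forces $H=K$ without any appeal to cyclicity of the ambient unit group; the paper's argument proceeds exactly this way and never invokes cyclicity.
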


\begin{proof}
Note that \eqref{20200727eq06} is equivalent to find infinitely many pairs $m_2$ and $j$ which satisfies \eqref{20200727eq05} for the equation 
\begin{equation} \label{20200727eq07}
k v^{m_2\phi(u)}-j u^{m_1\phi(v)}=1, 
\end{equation}
where $m_1> \frac{m(u,v)}{\phi(v)}$ and $k$ satisfies \eqref{20200727eq04}. 

To begin with, we note that if \eqref{20200727eq07} holds, then $j$ automatically satisfies \eqref{20200727eq06}, which follows by taking the modulus $v$ on both sides of \eqref{20200727eq07} and Euler's theorem. 
 
Therefore, it suffices for us to solve \eqref{20200727eq07} for infinitely many pairs $m_2$ and $j$. Taking modulus $u^{m_1\phi(v)}$ on both sides of \eqref{20200727eq07}, we see that it suffices to solve 
\begin{equation} \label{20200727eq08}
    kv^{m_2\phi(u)} \equiv 1 \quad \left(\textrm{mod} \ u^{m_1\phi(v)}\right),
\end{equation}
where 
$$
k \in  \left\{ x \in \left[1, u^{m_1\phi(v)}\right]: x \equiv 1 \  \left(\textrm{mod} \ u^{C(u,v)+1} \right) \right\}.
$$
Denote
$$
G_{m_1}(u,v):=\left\{ x \in \left[1, u^{m_1\phi(v)}\right]: x \equiv 1 \  \left(\textrm{mod} \ u^{C(u,v)+1}\right) \right\}.
$$
The solubility of \eqref{20200727eq08} will again follow if we show that
\begin{enumerate}
    \item [(a)] The set $G_{m_1}(u,v)$ is a subgroup of $\left(\ZZ / \left(u^{m_1\phi(v)} \right) \ZZ \right)^*$; 
    
    \medskip
    
    \item [(b)] $v^{\phi(u)}$ is a generator of the group $ G_{m_1}(u,v)$. 
\end{enumerate}

We now show (a) and (b). To begin, we note that
\begin{equation} \label{20200727eq08}
v^{\phi(u)} \equiv 1 \left(\textrm{mod} \ u^{C(u,v)+1}\right), 
\end{equation}
such that $v^{\phi(u)} \in G_{m_1}(u,v)$. This is because $m(u,v)$ is the smallest integer such that
$$
v^{\phi(u)} \not\equiv 1 \ (\textrm{mod} \ u^{m(u,v)+1})
$$
and $C(u,v)=m(u,v)-N_0$ for some $N_0 \in \{1, \dots, m(u,v)\}$. Next, it follows from \eqref{20200727eq08} that
$$
\left(v^{\phi(u)} \right)^\ell \in G_{m_1}(u,v), \quad \forall \ell \ge 1, 
$$
and hence
$$
\left\langle v^{\phi(u)} \right\rangle \subseteq G_{m_1}(u,v), 
$$.

Therefore, both assertions $(a)$ and $(b)$ will follow if we can show
$$
\left\langle v^{\phi(u)} \right\rangle=G_{m_1}(u,v).
$$
This equality follows from the fact that
$$
O_{m_1\phi(v)}=u^{m_1\phi(v)-C(u,v)-1}= \left|G_{m_1}(u,v) \right|
$$
which again hinges on Proposition \ref{AH generalized order}.
\end{proof}

We can now move on to generalizing the interval selection. Given the previous results, let $\{pc_1,...,pc_k\}$ be a finite set of multiples of some prime $p$, where we have $c_i \in \NN$ for all $i \in [1,k]$. Now, given some $v$ coprime to $p$, we can always find $n\in \NN$ such that $v<p^n=u$, and proceed to build our measure off of $v,u$. Also note that from this point we will work with the finite set of $n^{th}$ multiples $\{(pc_1)^n,...,(pc_k)^n\} = \{uc_1',...,uc_k'\}$ (such that $c_i' = c_i^n$).  As mentioned earlier, this will give us an equivalent result.\\\

\begin{thm}[Generalization of Theorem 3.4 in \cite{AH}]  \label{20200722thm01}
There exists a collection of $v$-adic intervals $\{I^{\alpha_\ell}_\ell \}_{\ell \ge 1}$ on $[0, 1)$, where $\alpha_\ell \ge 1$ is a positive integer associated to $\ell$, such that
\begin{enumerate}
    \item [(1)] The collection of $pc_i$-adic intervals $\{J_{\ell}^i\}_{\ell \ge 1}$ is pairwise disjoint and contained in $[0, 1)$, where $J_{\ell}^i$ is the smallest $pc_i$-adic interval that contains $I_\ell^{\alpha_\ell}$ for all $i \in [1,k]$. In particular, the collection $\{I_\ell^{\alpha_\ell}\}_{\ell \ge 1}$ is also pairwise disjoint;
    
    \medskip

    \item [(2)] For each $\alpha \ge 1, \alpha \in \NN$, there are only finitely many $\ell \ge 1$, such that $\alpha_\ell=\alpha$; 
    
    \medskip
    
    \item [(3)] For each $\ell \ge 1$, 
    \begin{equation} \label{20200722eq03}
    0<\zeta\left(J_{\ell}^i \right)-Z\left(I_\ell^{\alpha_\ell} \right) \le q^{-100\alpha_\ell} \left| I_\ell^{\alpha_\ell} \right|. 
    \end{equation}
    where $\zeta(J_{\ell}^i)$ is an interior endpoint of some child of $J_{\ell}^i$. Note that since $J_{\ell}^i$ is the smallest $pc_i$-adic interval which contains $I_\ell^{\alpha_\ell}$, condition \eqref{20200722eq03} in particular guarantees that the right endpoint of $I_\ell^{\alpha_\ell}$ is to the right of $\zeta(J^\ell)$.
\end{enumerate}
\end{thm}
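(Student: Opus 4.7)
Proof plan. I would construct the family $\{I^{\alpha_\ell}_\ell\}$ inductively in $\ell$, producing one $v$-adic interval per step inside a shrinking pool of ``available'' subregions of $[0,1)$. Condition $(2)$ is satisfied by enumerating indices through a diagonal ordering of pairs $(\alpha,s)\in\NN^2$, so each $\alpha$ is assigned to only finitely many $\ell$. Pairwise disjointness (condition $(1)$) is maintained by restricting, at each step, the target region to be disjoint from the finitely many previously selected $J^i_{\ell'}$'s; the used region has finite total length at each stage, so room always remains.

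At a typical single step with target region $\widetilde{J}\subseteq[0,1)$ and target $\alpha$, I first fix $m_1$ large enough that $m_1 > m(u,v)/\phi(v)$ and $u^{m_1\phi(v)}\geq q^{100\alpha}$. Select $k\in G_{m_1}(u,v)$ so that the $u$-adic point $k/u^{m_1\phi(v)}$ lies strictly in the interior of $\widetilde{J}$ with ample buffer on each side. Apply Proposition \ref{AH ap} to obtain $m_2, j$ with $j\equiv -1\pmod v$ satisfying
\begin{equation*}
\frac{k}{u^{m_1\phi(v)}} - \frac{j}{v^{m_2\phi(u)}} = \frac{1}{u^{m_1\phi(v)} v^{m_2\phi(u)}},
\end{equation*}
taking $m_2$ as large as needed from the infinitely many pairs guaranteed by the proposition. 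Setting $I^\alpha := [(j+1-v)/v^{m_2\phi(u)}, (j+1)/v^{m_2\phi(u)})$ yields a $v$-adic interval with $Z(I^\alpha) = j/v^{m_2\phi(u)}$, which is contained in $\widetilde{J}$ once $m_2$ is sufficiently large, and the identity above gives $0< k/u^{m_1\phi(v)} - Z(I^\alpha) \leq q^{-100\alpha}|I^\alpha|$ from the choice of $m_1$.

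The crux of the generalization is handling the $k$ bases $pc_1,\dots,pc_k$ simultaneously via the revolving distinguished point. For each $i$, let $J^i$ be the smallest $pc_i$-adic interval containing $I^\alpha$. Using $u=p^n$ and $\gcd(k,p)=1$, rewrite
\begin{equation*}
\frac{k}{u^{m_1\phi(v)}} = \frac{k\, c_i^{n m_1\phi(v)}}{(pc_i)^{n m_1\phi(v)}},
\end{equation*}
identifying $k/u^{m_1\phi(v)}$ as a $pc_i$-adic grid point at scale $nm_1\phi(v)$. By enlarging $m_2$ further (each base contributes one additional lower bound, so only finitely many), I can force the $pc_i$-scale of $J^i$ to be at least $nm_1\phi(v)-1$ for every $i$ simultaneously, so this common point lies at (or finer than) the child scale of each $J^i$ and, being strictly inside $I^\alpha\subseteq J^i$, is an interior endpoint of some child of $J^i$. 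Setting $\zeta(J^i):= k/u^{m_1\phi(v)}$ for every $i$ then realizes the relaxation of \eqref{key closeness criterion}: precisely because we no longer insist on the specific first child endpoint $Y(J^i)$, a single number-theoretic $u$-adic point serves as an interior child endpoint across all bases.

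The main obstacle is arranging this scale compatibility for all $i$ at once, together with ensuring $J^i\subseteq\widetilde{J}$ so that disjointness persists. Finiteness of the base list is essential: there are only finitely many lower bounds on $m_2$ to accommodate, and Proposition \ref{AH ap} supplies infinitely many valid $m_2$, so one may always select a suitably large one; similarly, $|J^i|\leq pc_i|I^\alpha|$ can be driven well below the buffer of $I^\alpha$ in $\widetilde{J}$ by taking $m_2$ large. It is exactly this simultaneous scale-control across every $pc_i$-grid that resists generalization to an infinite base list, motivating the entirely different approach in Section \ref{finite bases}.
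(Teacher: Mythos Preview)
Your overall architecture matches the paper's: both construct the family inductively, handle condition (2) by a diagonal-type enumeration, and---crucially---use the single $u$-adic point $\zeta = k/u^{m_1\phi(v)}$ as a \emph{common} interior child endpoint across every base $pc_i$ simultaneously. That relaxation from $Y(J^i)$ to a revolving $\zeta(J^i)$ is exactly the paper's Proposition~\ref{20200805prop01}, so the conceptual core is right.

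The gap is in your containment/disjointness step. Two concrete claims fail. First, the assertion that the $pc_i$-scale of $J^i$ is at least $nm_1\phi(v)-1$ is false when $p\mid c_i$: writing $c_i=p^{a}b$ with $\gcd(b,p)=1$, the exact $pc_i$-scale of $\zeta$ is $\lceil nm_1\phi(v)/(a+1)\rceil$, which for $a\ge1$ is much smaller than $nm_1\phi(v)$. Second, the inequality $|J^i|\le pc_i|I^\alpha|$ is false and the mechanism ``enlarge $m_2$'' cannot help: $|J^i|$ is pinned by $m_1$ and $c_i$ (it is the fixed-size $pc_i$-adic interval having $\zeta$ as an interior child endpoint), while $|I^\alpha|\to0$ as $m_2\to\infty$, so the inequality eventually reverses. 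Enlarging $m_2$ shrinks $I^\alpha$, not $J^i$.

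The paper handles precisely this point by splitting into cases. When $\gcd(u,c_i')=1$ it observes $|J^i|<|J|$ and simply spaces the seed $u$-adic intervals $\tilde{J}_\ell$ at least twice their own length apart. When $\gcd(u,c_i')\neq1$ it does not attempt a size bound at all; instead it performs a \emph{nested} pre-selection: within each seed $u$-adic interval it iteratively chooses a $uc_i'$-adic subinterval for every non-coprime $c_i'$, and only then places $\zeta$ inside the innermost box. Since any $uc_i'$-adic point lying strictly inside a $uc_i'$-adic interval must be a descendant endpoint of it, this traps each $J^i$ inside the corresponding pre-chosen box a priori, giving disjointness and containment in $[0,1)$ without any estimate on $|J^i|$. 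Your argument needs to replace the $m_2$-mechanism with either this nested-selection device or a correct bound on $|J^i|$ depending on $m_1$.
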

 \begin{rmk}
        As mentioned above, the measure created via this method will always also be $v$ and $p$-adic doubling, and if we let $c_1=1$ such that $J_{\ell}^1$ is our largest $p$-adic interval containing $I_{\ell}^{\alpha_{\ell}}$, then $\zeta(J_{\ell}^1)$ will always equal $Y(J_{\ell}^1)$.
\end{rmk}

As before this theorem's proof relies on a generalized Proposition 3.5 of \cite{AH} (Proposition \ref{20200805prop01} below); however, while in the original construction the work was essentially done after the proof of Proposition 3.5, we have to take more care.  After the proof of the following proposition we will go into more detail regarding how our intervals are to be selected. \\\

\begin{prop}[Generalized Proposition 3.5] \label{20200805prop01}
Given any interval $\tilde{J} \subset [0, 1]$ and any $\varepsilon>0$, there exists a $v$-adic interval $I \subset \tilde{J}$ such that 
$$
 0<\zeta\left(J^i \right)-Z\left(I \right) \le \varepsilon |I|,
$$
for all integer $i \in [1,k]$, where $J^i$ is the smallest $pc_i$-adic interval that contains $I$. 
\end{prop}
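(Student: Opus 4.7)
The plan is to use Proposition \ref{AH ap} to produce a \emph{single} $u$-adic point $\zeta := \tfrac{\kappa}{u^{m_1\phi(v)}}$ that will serve as the distinguished boundary $\zeta(J^i)$ simultaneously for every $i \in \{1,\ldots,k\}$. Because $p$ divides each $pc_i$, every $u = p^n$-adic point is automatically a $pc_i$-adic point at some sufficiently fine scale, so that the same point can play the role for every base. This is in contrast to the two-base case, where the distinguished point was forced to be $Y(J)$ for the sole base.

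The construction proceeds in the following order. First, fix $m_1$ (to be made large at the end) and select an integer $\kappa \equiv 1 \pmod{u^{C(u,v)+1}}$ satisfying the hypotheses of Proposition \ref{AH ap}, with the additional constraint that $\tfrac{\kappa}{u^{m_1\phi(v)}}$ lies well inside $\tilde{J}$; such a $\kappa$ exists because the admissible values, once divided by $u^{m_1\phi(v)}$, form an arithmetic progression with spacing $u^{-(m_1\phi(v) - C(u,v) - 1)}$, which becomes dense in $\tilde{J}$ for $m_1$ large. Next, Proposition \ref{AH ap} yields infinitely many pairs $(m_2, j)$ with $j \equiv -1 \pmod{v}$ satisfying
\[
\frac{\kappa}{u^{m_1\phi(v)}} - \frac{j}{v^{m_2\phi(u)}} = \frac{1}{u^{m_1\phi(v)}\,v^{m_2\phi(u)}},
\]
from which I pick one with $m_2$ as large as needed. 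Writing $j+1 = vt$, define $I := [\tfrac{t-1}{v^{m_2\phi(u)-1}}, \tfrac{t}{v^{m_2\phi(u)-1}})$; one checks directly that $Z(I) = \tfrac{j}{v^{m_2\phi(u)}}$.

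The heart of the verification is showing that $\zeta$ is an interior endpoint of a $pc_i$-adic child of some $pc_i$-adic interval $J^i$, for each $i$, and that $J^i$ is the smallest such interval containing $I$. Write $pc_i = p^{1+b_i} e_i$ with $\gcd(e_i,p)=1$, and let $\tilde{s}_i$ be the smallest positive integer with $u^{m_1\phi(v)} \mid (pc_i)^{\tilde{s}_i}$, namely $\tilde{s}_i = \lceil n m_1\phi(v)/(1+b_i)\rceil$. Then $\zeta = \tfrac{l_i}{(pc_i)^{\tilde{s}_i}}$ for some integer $l_i$, and by the minimality of $\tilde{s}_i$ one has $\nu_p(l_i) < 1+b_i = \nu_p(pc_i)$, so $pc_i \nmid l_i$; in other words, $\zeta$ is a genuine $pc_i$-adic boundary at scale $\tilde{s}_i$ and not at any coarser scale. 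Define $J^i$ to be the scale-$(\tilde{s}_i-1)$ $pc_i$-adic interval containing $\zeta$ in its interior, so $\zeta$ is an interior endpoint of one of its children. Choosing $m_2$ so large that $|I| \ll (pc_i)^{-\tilde{s}_i}$ for each of the finitely many indices guarantees both $I \subset J^i$ and that $I$ straddles no $pc_i$-adic boundary coarser than $\tilde{s}_i$, so $J^i$ is indeed the smallest $pc_i$-adic interval containing $I$.

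The required inequality follows by direct computation: $\zeta(J^i) - Z(I) = u^{-m_1\phi(v)} v^{-m_2\phi(u)} > 0$, and
\[
\frac{\zeta(J^i) - Z(I)}{|I|} = \frac{1}{v \cdot u^{m_1\phi(v)}} \le \varepsilon,
\]
provided $m_1$ was chosen with $u^{m_1\phi(v)} \ge (v\varepsilon)^{-1}$. The main subtlety is the coordination of the different $pc_i$-adic grids: the exponents $\tilde{s}_i$ depend on $\nu_p(c_i)$ and therefore vary from base to base. Since there are only finitely many bases, however, Proposition \ref{AH ap}'s supply of arbitrarily large $m_2$ lets a single $I$ simultaneously dominate all the thresholds. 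This is precisely the step one would expect to break down for infinitely many bases, underscoring why a genuinely new method is required for the full Theorem \ref{finite bases theorem}.
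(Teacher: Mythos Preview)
Your proof is correct and follows essentially the same strategy as the paper: fix a single $u$-adic point $\zeta = \kappa/u^{m_1\phi(v)}$ satisfying the congruence hypothesis of Proposition \ref{AH ap}, observe that because $p\mid pc_i$ this one point is simultaneously an interior child-endpoint for a $pc_i$-adic interval $J^i$ for every $i$, and then invoke Proposition \ref{AH ap} with $m_2$ large to produce a tiny $v$-adic $I$ straddling $\zeta$ and verify the four standard items.

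The only technical divergence is that the paper passes to the $n$-th powers $(pc_i)^n = uc_i'$ so that $u\mid uc_i'$ and $\zeta$ is \emph{automatically} a $uc_i'$-adic boundary at scale $t_1\phi(v)$ (one then reduces the fraction to locate the correct $J^i$), whereas you stay with $pc_i$ itself and compute the exact scale $\tilde{s}_i=\lceil nm_1\phi(v)/(1+b_i)\rceil$ via the $p$-adic valuation argument $\nu_p(l_i)<1+b_i$. Your route matches the proposition's literal statement (which says ``$pc_i$-adic'') and makes the scale determination more explicit; the paper's route trades this for a simpler divisibility at the cost of working with $(pc_i)^n$-adic grids, which is harmless since $n$-adic doubling and $n^k$-adic doubling coincide. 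Either way the core idea---one revolving point $\zeta$ serving all bases, with $m_2$ absorbing the finitely many size thresholds---is identical.
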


\begin{rmk}
    Note in particular that we do not require $J^i \subseteq \tilde{J}$ (or even $J^i \subseteq [0,1]$) -- however we will necessarily have $I \subset \tilde{J}$ and $I\subset J^i$.
\end{rmk}

\begin{proof}
We start with fixing an interval $\widetilde{J} \subset [0, 1]$ and some $\varepsilon>0$, and we let $J'$ be the largest $v$-adic interval which is contained in $\widetilde{J}$ with sidelength $1/v^{t_1'}$. We choose $t_1> \max\left\{\frac{m(u,v)}{\phi(v)},  t_1' \right\}$ and $\frac{1}{u^{t_1\phi(v)}}<\varepsilon v$, and we choose
$$
k \equiv 1 (\bmod u^{C(u,v)+1}), 
$$
and note that choosing $t_1$ large enough we can find a $k$ such that $\frac{k}{u^{t_1\phi(v)}} \in J'$. Fix such a pair of $t_1$ and $k$, and let 
$$
J:=\left[ \frac{k-1}{u^{t_1\phi(v)}}, \frac{k+u-1}{u^{t_1\phi(v)}} \right].
$$
We then have
$$
Y(J)=\frac{k}{u^{t_1\phi(v)}}
$$
and $J \subseteq J' \subseteq \widetilde{J}$ due to the choice of $t_1$ and the fact $u=p^n>v$. \\\

Note that \[\frac{k}{u^{t_1\phi(v)}} = \frac{kc_i'^{t_1\phi(v)}}{(uc_i')^{t_1\phi(v)}}\] is also the endpoint of some $uc_i'$-adic interval. It follows, writing \[\frac{kc_i'^{t_1\phi(v)}}{(uc_i')^{t_1\phi(v)}}\] in lowest terms, that there must exist some $y \in [1, uc_i'-1]$ such that $\frac{kc_i'^{t_1\phi(v)}}{(uc_i')^{t_1\phi(v)}}$ is the right endpoint of the $y^{th}$ child of some $uc_i'$-adic interval $J^i$, denoted $\zeta(J^i)$ -- for example if $\frac{kc_i'^{t_1\phi(v)}}{(uc_i')^{t_1\phi(v)}}$ is in lowest terms (which occurs when $u, c_i'$
are coprime as $u \nmid k$), then $\zeta(J^i)$ will be the right endpoint of the $y^{th}$ child of \[J^i = \left[ \frac{kc_i'^{t_1\phi(v)} - y}{(uc_i')^{t_1\phi(v)}}, \frac{kc_i'^{t_1\phi(v)} - y + uc_i'}{(uc_i')^{t_1\phi(v)}}\right).\] Henceforth we will let $J^i$ denote the smallest $uc_i'$-adic interval such that $\zeta(J^i)$ is the endpoint of the $y^{th}$ child.\\\

By Proposition \ref{AH ap}, there exists infinitely many pairs $t_2 \in \NN$ and 
$$
j \equiv -1 (\bmod v)
$$
such that
\begin{equation} \label{20200805eq01}
\frac{k}{u^{t_1\phi(v)}}-\frac{j}{v^{t_2\phi(u)}}=\frac{1}{u^{t_1\phi(v)}v^{t_2\phi(u)}}.
\end{equation} 
We choose such a pair $t_2$ and $j$, with $t_2$ sufficiently large such that
\begin{equation} \label{20200805eq02}
\frac{v-1}{v^{t_2\phi(u)}}+\frac{1}{u^{t_1\phi(v)}v^{t_2\phi(u)}}< \inf_{i \in [1,k]} \left \{\frac{1}{(uc_i')^{t_1\phi(v)}}\right \}
\end{equation} 
and let
$$
I:=\left[\frac{j+1-v}{v^{t_2\phi(u)}}, \frac{j+1}{v^{t_2\phi(u)}} \right].
$$
such that 
$$
\textZeta(I)=\frac{j}{v^{t_2\phi(u)}}.
$$.\\\

We now proceed to check items (1) through (4) in the proof of Proposition 3.5 of \cite{AH}, and doing so complete the proof.
        \begin{enumerate}
            \item The inequality $\zeta(J^i)>Z(I)$ follows because $\zeta(J^i)=Y(J)>Z(I)$ for all $i \in [i,k]$.\\\

            \item To show $I \subset J^i$, consider \begin{align*}
                |[l(I), \zeta(J^i)]| &= |[l(I), Z(I)]|+|[Z(I), \zeta(J^i)]|\\
                &= \frac{v-1}{v^{t_2\phi(u)}}+ \frac{1}{u^{t_1\phi(v)}v^{t_2(c-1)}}\\ &<\frac{1}{(uc_i')^{t_1\phi(v)}} \\&\leq |[l(J^i), \zeta(J^i)]|\end{align*} by our choice of $t_2$. Explicitly, this tells us that $l(I)>l(J^i)$ and $Z(J^i)\geq \zeta(J^i) > Z(I)$ which gives $r(J^i)>r(I)$, so we do have that $I \subset J^i$ for all $i$.\\\

            \item As before $I$ contains $\zeta(J^i)$ as an interior point (this is obvious considering just $I, J$), so indeed $J^i$ must be the smallest $uc_i'$-adic interval containing $I$.\\\

            \item The inequality $\zeta(J^i)-Z(I)<\varepsilon |I|$ again follows exactly as before by our choice of $t_1$, as \[\zeta(J^i)-Z(I) = \frac{k}{u^{t_1\phi(v)}}-\frac{j}{v^{t_2\phi(u)}}=\frac{1}{u^{t_1\phi(v)}v^{t_2\phi(u)}} < \frac{\varepsilon v}{v^{t_2\phi(u)}}  = \varepsilon|I|.\]
        \end{enumerate}
\end{proof}

With the proof complete, we can detail the additional complications that arise when applying the above result to prove Theorem \ref{20200722thm01}.\\\ 

To begin, consider $uc_i'$ such that $u, c_i$ are coprime, where \[J^i = \left[ \frac{kc_i'^{t_1\phi(v)} - y}{(uc_i')^{t_1\phi(v)}}, \frac{kc_i'^{t_1\phi(v)} - y + uc_i'}{(uc_i')^{t_1\phi(v)}}\right).\] Recalling that $\zeta(J^i)=Y(J)$, we see that it is possible that such a $J^i$ could possibly extend to the left of $l(J)$ (for example this may happen if $\zeta(J^i)=Z(J^i)$).\\\

In the application of Proposition 3.5 to Theorem 3.4 in \cite{AH}, the authors began by choosing $p$-adic intervals, and then selecting $J_{\ell}$ and $I^{\alpha_{\ell}}_{\ell}$ within each of these intervals. We also want to begin by choosing $p^n$-adic intervals, but if we choose our intervals carelessly (not paying attention to the initial $\tilde{J}$ in which our interval selection begins) it seems possible that one of our $J^i_{\ell}$ could intersect some $J^j_{\ell'}$ and the corresponding $I_{\ell'}$, which would be problematic.\\\

To avoid this, we note that for $u, c_i'$ coprime $J^i$ will always have total length less than $J$; thus it suffices to choose $u$-adic intervals $J$ at least twice their length apart. In other words, if we choose to find $J_{\ell}$ and $I_{\ell}$ within a $u$-adic $\tilde{J}$ of length $u^{-d}$, we would want our next $\tilde{J'}$ to be at least $u^{-d}$ units right of $r(\tilde{J})$. As we can make all of our intervals arbitrarily small, this can clearly be accomplished (an alternate way to look at it is that, given our largest $\tilde{J}$ has length $u^{-d}$, $d$ can clearly be chosen such that $2u^{-d}+2u^{-2d}+2u^{-3d}+\cdots=\frac{2}{u^d-1}$ becomes arbitrarily small) 

Now we deal with the case where $u, c_i'$ are not coprime. As alluded to earlier, the trouble here is that we a priori don't have any bounds on the size of $J^i$ chosen within an arbitrary $\tilde{J}$.  However, if we choose an $uc_i'$-adic point contained strictly inside an $uc_i'$-adic interval $\tilde{J}_i$, then this $uc_i'$-adic must be the endpoint of some descendant of $\tilde{J}_i$.\\\

With this insight, we choose our intervals as follows: 
\begin{enumerate}
    \item  choose a sequence of $u$-adic intervals $\tilde{J_{\ell}}$, spaced apart as described above
    \item begin with the least $c_i'$ not coprime to $u$, and choose some $uc_i'$-adic interval contained within each $\tilde{J_{\ell}}$ (for all $\ell$) -- denote this interval $\tilde{J^i_{\ell}}$
    \item repeat this process for the next largest $c_j'$ not coprime to $u$, choosing some $uc_j'$-adic interval contained within each $\tilde{J_{\ell}^i}$ denoted $\tilde{J_{\ell}^j}$
    \item iterate steps (2) and (3) until there are no more $c_i'$ which are not coprime to $u$
    \item let $c_w'$  denote the largest $c_i'$ not coprime to $u$, such that we end up with infinitely many disjoint $c_w'$-adic intervals $\tilde{J_{\ell}^w}$ 
\end{enumerate}
Choosing $t_1$ large enough, we can always find $Y(J_{\ell})=\zeta(J^i_{\ell})$ contained within each of our $\tilde{J_{\ell}^w}$, and by the reasoning given above, all of our $uc_i'$-adic intervals $J^i_{\ell} $ will be disjoint, and each will be the largest $uc_i'$-adic interval containing our $v$-adic interval $I_{\ell}$, as desired.\\\

From the comments in the introduction in \cite{AH} and from a careful reading of those results, we verify that we have generalized all steps needed to prove Theorem \ref{two bases theorem}.  Indeed, the construction of the measure by re-weighting the chosen intervals is the same as \cite{AH} (and as described above) once we have selected our intervals $I^{(\alpha)}$.  Therefore, the proofs that this measure constructed on the chosen $I^{(\alpha)}$ is not doubling but is $u$ and $v$-adic doubling remain carry through without change.\\\

\section{A new construction for any finite list of bases: interplay of number theory, topology, and analysis}
\label{finite bases}
We begin by giving some intuition as to why the construction given above fails to admit a generalization to arbitrary finite lists of natural numbers. While details are outlined in the appendix, essentially it relies on the properties of the \emph{far-numbers} as discussed in \cite{AH} and \cite{AH2}.
\begin{defn} (Definition 2.7 in \cite{AH})
    A real number $\del$ is \emph{$n$-far} if the distance from $\del$ to each given rational of the form $k/n^m$ is at least some fixed multiple of $1/n^m$, where $m \geq 0$, $k\in \ZZ$. That is, if there exists $C>0$  such that
\begin{equation}
\label{C delta}
   \left| \del-\frac{k}{n^m} \right| \ge \frac{C}{n^m}, \quad \forall m \ge 0, k \in \ZZ.
\end{equation}
where $C$ depends on $\del$ but is independent of $m$ and $k$. 
\end{defn}
With this definition in mind, we now attempt to extend the framework of \cite{AH} in the setting of primes $p$ and $q$, to the setting of three distinct primes.  Naively, given primes $p, q,$ and $r$ one might try to extend the construction in \cite{AH} by finding a $p$-adic interval $J_p$ and a $q$-adic interval $J_q$ which are the largest $p$ and $q$-adic intervals (respectively) containing some $r$-adic interval $I_r$ and simultaneously satisfy \[Y(J_p)-Z(I_r) \leq \varepsilon |I_r|\] and \[Y(J_q)-Z(I_r) \leq \varepsilon |I_r|\] for some given arbitrarily small $\varepsilon>0$.
Observe that all $p$-adic points contained in $(0,1)$ are $q$-far, and similarly all $q$-adic points contained in $(0,1)$ are $p$-far. More concretely, if we have $Y(J_p) = \frac{k}{p^n}$ and $Y(J_q)=\frac{j}{q^m}$ for some $k,j,m,n \in \NN$, then it can be easily verified that $|Y(J_p)-Y(J_q)|\geq \frac{1}{p^nq^m}$ (see \cite{AHJOW}). The triangle inequality then yields \[|Y(J_p)-Y(J_q)|\leq |Y(J_p)-Z(I_r)|+|Y(J_q)-Z(I_r)|\leq 2\varepsilon |I_r|,\] such that $\frac{1}{p^nq^m} \leq 2\varepsilon |I_r|$ for any arbitrarily small $\varepsilon$. This would imply that at least one of $\frac{1}{q^m} = \frac{|J_q|}{q}$ or $\frac{1}{p^n} = \frac{|J_p|}{p}$ was of order much smaller than $|I_r| = \frac{1}{r^k}$ for some $k \in \NN$. Recalling that we would want $J_p, J_q$ to both contain $I_r$ this seems inherently problematic, and motivates us to try and find a new approach in order to select intervals which are sufficiently ``close" for multiple primes. This new approach is outlined throughout the rest of this section.

\begin{thm}
    Let $\{n_1,\ldots,n_k\}\subseteq\NN$ be a finite set of natural numbers which are not powers of $2.$ Then, there exists an infinite family of measures which are $n_i$-adic doubling for all $i$ but not doubling overall.
\end{thm}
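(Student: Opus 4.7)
The plan is to follow the overall structure of the Anderson--Hu reweighting but with two crucial departures: the intervals $I^\alpha$ to which the reweighting is applied will be \emph{dyadic} (rather than $v$-adic for a fixed $v$), and they will be spread over all of $\RR$ (one inside each unit interval $[\alpha,\alpha+1]$ for arbitrarily large $\alpha\in\NN$). With $\varepsilon=\varepsilon(\alpha)<2^{-2\alpha}$, I would select $I^\alpha$ of length $2^{-x}$ so that its right endpoint is simultaneously within $\varepsilon\cdot 2^{-x}$ of an $n_i$-adic endpoint for every $i$, then apply the two-sided reweighting of Subsection~\ref{measure construction} with parameter $\alpha$. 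The ratio of the measures of the two adjacent dyadic intervals straddling the seam in $I^\alpha$ will grow without bound in $\alpha$, breaking overall doubling, while the closeness of the seam to $n_i$-adic endpoints---combined with an exhaustion argument as in \cite{AH} applied independently to each $n_i$---preserves $n_i$-adic doubling.

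The heart of the proof is the following simultaneous Diophantine approximation: for every $\varepsilon>0$, there are arbitrarily large $x\in\NN$ with
\[
\abs{\frac{1}{2^x}-\frac{1}{n_i^{[x\log_{n_i}2]}}}<\frac{\varepsilon}{2^x}\qquad\text{for all }i=1,\ldots,k,
\]
where $[\,\cdot\,]$ is the nearest integer. Writing $y_i=[x\log_{n_i}2]$ and expanding, the inequality reduces (up to constants depending only on $n_1,\ldots,n_k$) to requiring that the distance from $x\log_{n_i}2$ to the nearest integer be less than $\varepsilon'$ for every $i$. Set $\vec{a}=(\log_{n_1}2,\ldots,\log_{n_k}2)$ and consider the orbit $\{x\vec{a}\bmod 1:x\in\NN\}\subset\TT^k$. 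Its closure $H$ is a closed subgroup of $\TT^k$ containing $0$, and by the structure theorem for closed subgroups of the torus $H\cong F\times\TT^d$ for some finite abelian group $F$ and some $d\geq 0$. Since no $n_i$ is a power of $2$, the Gelfond--Schneider theorem forces each $\log_{n_i}2$ to be irrational, so the orbit is infinite and $d\geq 1$; therefore $0$ is a non-isolated point of $H$, and by density of the orbit in $H$ we can find arbitrarily large $x$ with $x\vec{a}\bmod 1$ arbitrarily close to $0$.

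With the Diophantine claim in hand, assembling the family $\{I^\alpha\}$ is straightforward: for each $\alpha$, take $x$ large enough that $2^{-x}$ is much smaller than all previously chosen scales and that the approximation holds with $\varepsilon<2^{-2\alpha}$, then translate the resulting dyadic interval into $[\alpha,\alpha+1]$. Disjointness is automatic. On each $I^\alpha$ I would apply the reweighting of Subsection~\ref{measure construction} with dyadic children in place of $q$-adic children, and take Lebesgue measure elsewhere. The breakdown of overall doubling follows from the same calculation as in the two-bases case. For each fixed $n_i$, $n_i$-adic doubling follows from an exhaustion argument as in \cite{AH}: any $n_i$-adic interval meeting $I^\alpha$ either sits entirely on one side of the seam (where the measure is a constant multiple of Lebesgue at the relevant scale) or spans it, and in the latter case the bound $\varepsilon<2^{-2\alpha}$ forces either the reweighted fragment contained in the $n_i$-adic interval to be negligible, or the $n_i$-adic interval to be so large that the fragment is absorbed into an abundant Lebesgue bulk, keeping the ratio with any $n_i$-adic sibling bounded. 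Varying the reweighting parameters $a,b$ (with $a+b=2$, $a<1<b$) then produces the claimed \emph{infinite family} of measures, and compactification to $\TT=[0,1]$ is a routine scaling modification.

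The principal obstacle is the Diophantine claim itself. A naive approach via Weyl equidistribution would demand rational independence of $1,\log_{n_1}2,\ldots,\log_{n_k}2$---a special case of Schanuel's conjecture that is open for $k\geq 2$. The closed-subgroup route above circumvents this entirely, relying only on the \emph{qualitative} fact that the orbit accumulates at $0$, which holds unconditionally once each coordinate of $\vec{a}$ is irrational. A secondary technical point is to verify quantitatively that the iterated reweighting at many nested scales inside $I^\alpha$ remains compatible with the single closeness estimate at the top scale; this is handled exactly as in the two-bases proof because the procedure is self-similar and the scale $\varepsilon<2^{-2\alpha}$ is comfortably smaller than the $\alpha$ iterations of dyadic halving demand.
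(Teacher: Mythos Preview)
Your proposal is correct and takes essentially the same approach as the paper: dyadic intervals placed in each $[\alpha,\alpha+1]$, the same simultaneous Diophantine closeness via the orbit of $(\log_{n_i}2)_i$ in $\TT^k$ accumulating at $0$, and the Anderson--Hu reweighting plus exhaustion for $n_i$-adic doubling. Your appeal to the structure theorem for closed subgroups of $\TT^k$ is a slight streamlining of the paper's explicit case split (rationally independent versus dependent, the latter handled by constructing a linear subspace of $\TT^k$); note also that the irrationality of $\log_{n_i}2$ follows from unique factorization alone and does not require Gelfond--Schneider.
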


Note that for technical reasons, we will always exclude the set of bases that only consists of $1\in\NN$, otherwise we would tautologically claim the existence of a measure that is both non-doubling and doubling.
\begin{rmk}
    It suffices to assume that no $n$ is a power of $2$ because we will construct a dyadic doubling measure, which is equivalent to a $2^n$-adic doubling measure for each $n\in\NN.$  We choose to make it dyadic doubling only out of convenience to guarantee the proper containment relations from the intervals that will be constructed..
\end{rmk}

By a careful reading of \cite{AH}, in particular, Sections $5$ and $6$ (and as discussed in Section \ref{two bases} here), it suffices to find an infinite family of disjoint intervals $I^{(\alpha)}$ (with $\alpha\in\NN$), where each contains a dyadic interval $I_2^{(\alpha)}$ satisfying \eqref{key closeness criterion}.  Namely, for all $1\leq i \leq m$ we want that \[|Y(I_{n_i}^{(\alpha)})-Z(I_2^{(\alpha)})|<2^{-100\alpha}|I_2^{(\alpha)}|\] where $I_{n_i}^{(\alpha)}$ is the smallest $n_i$-adic interval in $I^{(\alpha)}$ containing $I_2^{(\alpha)}.$
We will construct our measure through dyadic intervals and apply the weighting procedure from Section $4$ of \cite{AH} (and outlined in Section \ref{two bases} here) to these intervals.  Then, we get a measure which is dyadic doubling and $n_i$-adic doubling but not doubling overall.  However, as related above, guaranteeing the closeness relationship is problematic.  Therefore, we will permit a key relaxation of \eqref{key closeness criterion} by allowing \[|Y(I_{n_i}^{(\alpha)})-Z(I_2^{(\alpha)})|<2^{-2\alpha}|I_2^{(\alpha)}|\] rather than trying to get \[0<Y(I_{n_i}^{(\alpha)})-Z(I_2^{(\alpha)})<2^{-2\alpha}|I_2^{(\alpha)}|.\] As discussed in Section $5$ and $6$ in \cite{AH}, this bound on the absolute value is all that is really needed.  Indeed, the same proof holds just with us having to switch directions in running through the outlined exhaustion procedure; the authors in \cite{AH} choose the specific interior points to be in the given orientation for number theoretic purposes.  Our replacement for this stems from the closeness relationship below.

\begin{lemma}
\label{finite base approx}
    Let $\{n_1,\ldots,n_k\}\subseteq\NN,$ $\alpha\in\NN$ and $\varepsilon = 2^{-100\alpha}>0.$ Then, there exists infinitely many $x\in \NN$ such that for all $1\leq i \leq k$ that \begin{equation} \label{above} \left|\frac{1}{2^x}-\frac{1}{n_i^{\left[x\log_{n_i}(2)\right]}}\right|<\varepsilon \, \frac{1}{2^x}.\end{equation}
\end{lemma}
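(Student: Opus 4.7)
The plan is to rewrite the inequality as a simultaneous Diophantine approximation problem for the logarithms $\log_{n_i} 2$, and then to invoke a Kronecker-style orbit-closure argument on the torus $\mathbb{T}^k$.

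First I would clear the denominator in (\ref{above}) by multiplying through by $2^x$, reducing the target to $|1 - 2^x/n_i^{[x\log_{n_i}2]}| < \varepsilon$. Setting $\alpha_i := \log_{n_i} 2$ and letting $\theta_i := x\alpha_i - [x\alpha_i] \in [-\tfrac{1}{2},\tfrac{1}{2}]$ denote the signed deviation of $x\alpha_i$ from its nearest integer, the identity $2^x/n_i^{[x\alpha_i]} = n_i^{\theta_i}$ converts the target into $|1 - n_i^{\theta_i}| < \varepsilon$. Using the elementary estimate $|e^t - 1| \leq 2|t|$ for $|t| \leq 1$, it then suffices to find infinitely many $x \in \NN$ with $|\theta_i| < \delta$ for every $i$, where $\delta := \min_i \varepsilon/(2\ln n_i) > 0$ (we may assume each $n_i \geq 2$; if some $n_i$ is a power of $2$ then $\alpha_i$ is rational and $\theta_i = 0$ along an obvious arithmetic progression, so such indices are harmless). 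This reduces the lemma to the simultaneous Diophantine statement $\|x\alpha_i\|_{\mathbb{T}} < \delta$ for all $i$ at once.

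Next I would apply the standard topological fact that the closure of the cyclic subgroup $\{x\alpha : x \in \ZZ\} \subseteq \mathbb{T}^k$, where $\alpha := (\alpha_1, \ldots, \alpha_k)$, is a closed subgroup $H$ of $\mathbb{T}^k$ necessarily containing $0$. Density of the orbit inside $H$ therefore yields some $x_0 \in \ZZ$ with $\|x_0\alpha_i\|_{\mathbb{T}} < \delta$ for all $i$, and replacing $x_0$ by $-x_0$ if necessary we may take $x_0 \in \NN$. To upgrade to infinitely many valid $x$, I would split into two cases: if $H$ is positive-dimensional, then density inside the infinite, no-isolated-points group $H$ forces each neighborhood of $0$ to contain infinitely many orbit points; if $H$ is finite, the orbit is periodic of some period $N$, and the multiples $x = jN$ (for $j \in \NN$) give $\theta_i = 0$ exactly for every $i$, supplying infinitely many valid $x$ for free.

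The main obstacle I anticipate is justifying this final density step without assuming rational independence of $1, \alpha_1, \ldots, \alpha_k$ over $\QQ$, since the full linear-independence question for arbitrary $n_i$ is bound up with Schanuel's conjecture, as flagged in the introduction. The orbit-closure framework above sidesteps this cleanly, because $0$ automatically lies in the closure of any cyclic subgroup of a compact abelian group, so no rational-independence hypothesis is needed. As a self-contained alternative that avoids closure-of-subgroup arguments, one could instead invoke Dirichlet's simultaneous approximation theorem, which for each $Q \in \NN$ produces $1 \leq q \leq Q^k$ with $\|q\alpha_i\|_{\mathbb{T}} < 1/Q$ for every $i$; sending $Q \to \infty$ then forces infinitely many distinct such $q$, completing the proof.
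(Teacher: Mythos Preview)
Your proof is correct and follows the same overall strategy as the paper: rewrite the inequality as $|1-n_i^{\theta_i}|<\varepsilon$ with $\theta_i$ the signed distance of $x\log_{n_i}2$ to the nearest integer, reduce to the simultaneous condition $\|x\alpha_i\|_{\TT}<\delta$, and then argue that the $\ZZ$-orbit of $\alpha=(\alpha_1,\dots,\alpha_k)$ in $\TT^k$ accumulates at $0$. The execution differs. The paper splits into two cases according to whether $\{\log_{n_i}2\}$ is rationally independent (invoking Kronecker directly) or not, and in the dependent case proves a separate proposition that the orbit is dense in an explicit linear subspace of $\TT^k$ containing $0$. Your argument bypasses this dichotomy by noting that the orbit closure is automatically a closed subgroup $H$ of $\TT^k$, and then splitting on whether $H$ is finite (periodic orbit) or positive-dimensional (no isolated points); this is cleaner and avoids the auxiliary subspace proposition. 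Your Dirichlet alternative is more elementary still, though the final clause ``sending $Q\to\infty$ then forces infinitely many distinct such $q$'' needs one extra line: if only finitely many $q$ arise, some fixed $q_0$ satisfies $\|q_0\alpha_i\|<1/Q$ for arbitrarily large $Q$, whence $q_0\alpha_i\in\ZZ$ for all $i$ and every multiple of $q_0$ works. What the paper's approach buys is an explicit description of the subspace in which the orbit is dense, which it later reuses for the infinite-base discussion; your approach buys brevity and avoids any appeal to the unresolved rational-independence question tied to Schanuel.
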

    The significance of the $\alpha$ here is that in the measure construction we will choose our $x$ such that $\varepsilon =2^{-100\alpha}<< 2^{-2\alpha}$ just as done in \cite{AH}.
\begin{rmk}
     Upon showing the existence of such an $x$ we can construct dyadic intervals $I_2^\alpha$ in $[\alpha,\alpha+1]$ such that if $J_i^\alpha$ is the smallest $n_i$-adic interval containing $I_2^\alpha$ then $|Y(J_i^\alpha)-Z(I_2^\alpha)|<\varepsilon|I_2^\alpha|$ which is sufficient to guarantee the existence of a measure which is dyadic doubling and $n_i$-adic doubling for each $n_i$. Alternatively, we could construct this measure on any $n$-adic intervals where $n$ is at most the smallest of our list of natural numbers with only minor modifications in the proofs below. 
     \end{rmk} 

\begin{proof}
    Re-arranging Equation \ref{above} is equivalent to finding values of $x$ so that \[\left|1-\frac{2^{x}}{n_i^{\left[x\log_{n_i}(2)\right]}}\right|<\varepsilon\] 
    
    for all $i$. Note that the orbit of rationally independent points $(\theta_1,\ldots,\theta_m)\in\TT^m$ under multiplication by an integer is dense in $\TT^m.$ It is an open problem as to whether \[S=\{\log_{n_i}(2)| 1\leq i \leq k\}\] is a rationally independent set -- this is equivalent to asking whether $\{\frac{1}{\log n_i}| 1 \leq i \leq k\}$ is rationally independent, which the Schaunel conjecture (if true) would imply. Hence we will consider two cases without casting any aspersions on the validity of either case.

    If $S$ is rationally independent, then the orbit of these points is dense in $\TT^k.$
If the set $S$ is rationally dependent, then the orbit under the $\ZZ$-action will be dense in some linear subspace of $\TT^k$ (as proven below in Proposition \ref{dense in torus}). 
    
    Thus in either case, for $\frac{1}{2}>\delta>0$, there exist infinitely many $x$ such that $|\{x\log_{n_i} 2\}|\leq \delta$  and given $\varepsilon$, we can pick $\delta$ small enough such that $|1-n_i^{\pm\delta}|<\varepsilon.$ For any such $x$ we then get that \[\left|1-\frac{2^{x}}{n_i^{\left[ x\log_{n_i} 2\right]}}\right|\leq \left|1-n_i^{\pm\delta}\right|< \varepsilon. \qedhere\]
\end{proof}

\begin{prop}
\label{dense in torus}
    Let $\{\theta_1,\ldots,\theta_k\}\subseteq \RR\setminus \QQ$ be rationally independent. Then, the orbit of $(\theta_1,\ldots,\theta_k)$ under the action of $\ZZ$ is dense in some linear subspace of $\TT^k.$
\end{prop}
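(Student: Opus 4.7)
The plan is to deduce the result from the classical structure theorem for closed subgroups of the torus $\TT^k$, which may be viewed as a consequence of Kronecker's theorem together with Pontryagin duality. Write $\theta=(\theta_1,\ldots,\theta_k)\in\RR^k$ and consider the cyclic subgroup $O=\{n\theta \bmod \ZZ^k : n\in\ZZ\}$ of $\TT^k$; its closure $H=\overline{O}$ is a closed subgroup of the compact abelian Lie group $\TT^k$.

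First I would identify $H$ explicitly. By Pontryagin duality, a closed subgroup of $\TT^k$ equals the annihilator of its dual inside $\widehat{\TT^k}\cong\ZZ^k$, and a short character computation (testing $\exp(2\pi i\, m\cdot n\theta)=1$ for all $n\in\ZZ$) yields $H=\Lambda^\perp:=\{x\in\TT^k : m\cdot x\equiv 0\pmod 1 \text{ for all } m\in\Lambda\}$, where $\Lambda=\{m\in\ZZ^k : m\cdot\theta\in\ZZ\}$. The structure theorem for compact abelian Lie groups then says the identity component $H_0$ is a subtorus, namely the image in $\TT^k$ of the rational linear subspace $V=\{y\in\RR^k : m\cdot y=0 \text{ for all } m\in\Lambda\}$, and $H/H_0$ is finite.

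The rational independence hypothesis enters via the map $\Lambda\to\RR$, $m\mapsto m\cdot\theta$: it has image in $\QQ$, and is injective, since $m\cdot\theta=0$ with $m\in\ZZ^k$ forces $m=0$ by hypothesis. Hence $\Lambda$ has rank at most $1$, and consequently $\dim V\geq k-1$, so $H_0$ is either all of $\TT^k$ or a codimension-one subtorus. To conclude density in a single linear subspace of $\TT^k$ rather than in a union of cosets of one, I would pass to the finite-index sub-orbit $\{nN\theta\bmod\ZZ^k : n\in\ZZ\}$ where $N=[H:H_0]<\infty$. This sub-orbit lies in $H_0$, and any dense subgroup of $H$ intersected with the open-and-closed subgroup $H_0$ remains dense in $H_0$; thus the sub-orbit is dense in $H_0$, which is the projection of the linear subspace $V$ to $\TT^k$.

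The technical heart is the annihilator identification $H=\Lambda^\perp$ via Pontryagin duality; once closed subgroups of $\TT^k$ are known to be determined by their character annihilators in $\ZZ^k$, the rest is essentially bookkeeping, and the rational-independence hypothesis only serves to pin down the rank of $\Lambda$ (and hence the dimension of the limiting subspace).
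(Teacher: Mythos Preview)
Your approach via the structure theory of closed subgroups of $\TT^k$ is correct and genuinely different from the paper's. The paper argues directly and elementarily: it selects a maximal rationally independent subset $\theta_1,\ldots,\theta_j$ (the printed hypothesis ``rationally independent'' is evidently a typo for ``rationally dependent'', as the first line of the paper's proof makes clear), writes down an explicit $j$-dimensional subspace $V\subseteq\TT^k$, invokes Kronecker's theorem in $\TT^j$ on the first $j$ coordinates to approximate an arbitrary point of $V$, and then propagates the approximation to the remaining $k-j$ coordinates via the rational relations expressing $\theta_{j+1},\ldots,\theta_k$ in terms of $\theta_1,\ldots,\theta_j$. Your route instead identifies the orbit closure $H$ with the annihilator of the relation lattice $\Lambda=\{m\in\ZZ^k: m\cdot\theta\in\ZZ\}$ and appeals to the fact that closed subgroups of tori have a subtorus as identity component. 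What this buys you is robustness: the argument works uniformly regardless of whether the $\theta_i$ are rationally dependent or independent, and it sidesteps a delicate point in the paper's proof (the paper deduces density in $\TT^j$ from rational independence of $\theta_1,\ldots,\theta_j$ alone, whereas Kronecker in fact requires $1,\theta_1,\ldots,\theta_j$ to be rationally independent). What the paper's route buys is that it is self-contained and avoids Pontryagin duality. One small gap in your write-up: the sub-orbit $\{nN\theta\bmod\ZZ^k\}$ need not literally coincide with $O\cap H_0$, only sit inside it with finite index; to finish you should add the one-line observation that a finite-index subgroup of a dense subgroup of the connected group $H_0$ is again dense (its closure is open, hence all of $H_0$).
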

\begin{proof}
    Because $\theta_1,\ldots,\theta_k$ are rationally independent there is some minimal $\QQ$-spanning set. Without loss of generality say \[\{\theta_1,\ldots,\theta_k\}\subseteq \Span_\QQ\{\theta_1,\ldots,\theta_j\}\] with $j<k$ and $\theta_1,\ldots,\theta_j$ rationally independent. Consider the linear subspace \[V:=\Span_\RR \left\{\begin{pmatrix}\theta_1\\0\\0\\\vdots\\0\\\theta_{j+1}\\\vdots\\\theta_k\end{pmatrix},\begin{pmatrix}0\\\theta_2\\0\\\vdots\\0\\\theta_{j+1}\\\vdots\\\theta_k\end{pmatrix},\ldots,\begin{pmatrix}0\\0\\0\\\vdots\\\theta_j\\\theta_{j+1}\\\vdots\\\theta_k\end{pmatrix},\right\}/\ZZ^k\subseteq \TT^k.\]

    We aim to show that the orbit of $(\theta_1,\ldots,\theta_k)$ is dense in this $j$-dimensional subspace $V\subseteq \TT^k.$ Take $(x_1,\ldots,x_k)\in V$ and $\varepsilon>0.$ Since $(\theta_1,\ldots,\theta_j)$ is rationally independent, the $\ZZ$-orbit is dense in $\TT^j.$ Hence, there is some $x\in \ZZ$ such that for all $1\leq i \leq j$ we have $|x_i-x\theta_i|<\frac{\varepsilon}{C}$ where 
    \[C=\max_{j+1\leq \ell \leq k}\left\{\sum_{i=1}^j |r_i|:\theta_{\ell}=r_1\theta_1+\cdots+r_j\theta_j\right\}.\] 
    
    This constant is well defined because $\theta_1,\ldots,\theta_j$ are rationally independent, so for each $\theta_{\ell}$ with $\ell>j$, there are unique $r_1,\ldots,r_j$ such that $\theta_{\ell}=r_1\theta_1+\cdots+r_j\theta_j.$
    Then, for all $j+1\leq \ell \leq k$ we get that \[x_\ell = r_1x_1+\cdots+r_jx_j\] and \[\theta_\ell=r_1\theta_1+\cdots+r_j\theta_j.\] This gives
    \[|x_\ell-x\theta_\ell|=\sum_{i=1}^{j} |r_i||x_i-x\theta_i|<\frac{\varepsilon}{C}(\sum_{i=1}^{j} |r_i|)<\varepsilon.\] Hence, for each $(x_1,\ldots,x_k)\in V$ we have an $x\in\NN$ such that \[\|(x\theta_1,\ldots,x\theta_k)-(x_1,\ldots,x_k)\|_\infty<\varepsilon;\] thus the $\ZZ$-orbit of $(\theta_1,\ldots,\theta_k)$ is dense in $\TT^k.$ 
\end{proof}

\begin{rmk}
    If the Schanuel conjecture \cite{Sch} were true then $\{\log_{n_i}(2)|1\leq i\leq k\}$ would be rationally independent and thus we could guarantee that the orbit would be dense in the torus. Hence, we would be able to choose points in the orbit in any neighborhood of $0.$ In particular we could guarantee that $\{x\log_{n_i}(2)\}<\frac{1}{2}$ and arbitrarily close to $0.$ we could then choose $x$ such that $[x\log_{n_i} 2]=\lfloor x\log_{n_i}(2)\rfloor$ giving \[\frac{1}{2^x}<\frac{1}{n_i^{[x\log_{n_i}(2)]}}\] for all $n_i$. this would guarantee that the intervals we construct would have $Z(I_2^{(\alpha)})$ would be to the left of the $Y(I_{n_i}^{(\alpha)})$ for each $n_i$ which would mimic the construction in \cite{AH}. This alignment would give a nice interpretation as to why the original number theoretic construction of \cite{AH} does not carry over in this setting.  Explicit details related to this are given in the appendix.  The relationship to the Schanuel conjecture connects many areas in our setting.
\end{rmk}
Now, we prove Theorem \ref{finite bases theorem}.
\begin{proof}[Proof of Theorem \ref{finite bases theorem}]
    We begin by selecting the collection of intervals on which to alter our measure. For each $\alpha\in\NN$ find a requisite $x$ as constructed Lemma \ref{finite base approx} for $\varepsilon:=2^{-100\alpha}.$
    Then, define \[I_2^{(\alpha)}=\left[\alpha,\alpha+\frac{1}{2^{x-1}}\right)\] and \[I_{n_i}^{(\alpha)}=\left[\alpha,\alpha+\frac{1}{n_i^{[x\log_{n_i}(2)]-1}}\right).\]
    This gives
    \[|Y(I_{n_i}^{(\alpha)})-Z(I_2^{(\alpha)})|\leq \varepsilon|I_2^{(\alpha)}|.\] Hence it suffices to show that for all $n_i$ we have the containment $I_2^{(\alpha)}\subseteq I_{n_i}^{(\alpha)}$ as the proximity of $Y(I_{n_i}^{(\alpha)})$ and $Z(I_2^{(\alpha)})$ guarantees that this is the smallest such interval containing it.  We enlose a picture of what nested $n$-adic intervals would look like for $n=2,3,5,6$ in Figure \ref{finite base figure}. Set the following notation: for an interval $I=[a, b) \subset \RR$, let $l(I)=a$ and $r(I)=b$ denote the left and right-most endpoints of the closure of $I$, respectively. Containment follows from looking at the relative position of the endpoints. Indeed, $l(I_2^{(\alpha)})=l(I_{n_i}^{(\alpha)})$ and we will show that  \[r(I_{n_i}^{(\alpha)})=\alpha+n_i y\]  is to the right of \[r(I_2^{(\alpha)})=\alpha+2z\] where $y=\frac{1}{n_i^{[x\log_{n_i}(2)]}}$ and $z=\frac{1}{2^x}.$ Because $n_i>2$ we have that \[r(I_{n_i}^{(\alpha)})-r(I_2^{(\alpha)})=(n_i-2)y+2(y-z)>0\] as $2(y-z)>-2\varepsilon z$ and $\varepsilon$ is much smaller than $n_i-2 \geq 1$.

    As discussed earlier, one can follow the procedure in Sections $4,5$ and $6$ of \cite{AH} to get the main result.  In particular, the proof of $n$-adic doubling for each $n$ is essentially the same as in \cite{AH} but with some of the details slightly changed. First off, the ``trivial" cases of finding the ratio for $\frac{\mu(J_{j_1})}{\mu(J_{j_2})}$ for children of the $n$-adic interval $J$ containing $I_2^{(\alpha)}$ are the exact same in this case.

    In the other cases, the proof is slightly different. This is because $Y(I_n^{(\alpha)})$ may be to the right or the left of $Z(I_2^{(\alpha)}).$ We run through these cases.

    Suppose that $J=I_n^{(\alpha)}$ for some $n\in\NN$ and large enough $\alpha.$ Let $J_1,\ldots,J_n$ denote the $n$-adic children of $J.$ First, suppose that $Y(J)>Z(I)$ where $I=I_2^{(\alpha)}.$ In that case we get that \[\frac{\mu(J_{j_1})}{\mu(J_{j_2})}=1\;\forall j_1,j_2\in \{3,\ldots,n\}\] as on those intervals we'll just get the Lebesgue measure as they are outside of $I.$ 

    Then, for $J_i$ for $i=1,2$ we get \[\frac{a|I|}{2}\leq \mu(J_1)\leq |I|\] and \[\frac{b|I|}{4}\leq \mu(J_2)\leq 2|I|.\]

    In the case where $Y(J)<Z(I)$ we get \[\frac{\mu(J_{j_1})}{\mu(J_{j_2})}=1\;\forall j_1,j_2\in \{4,\ldots,n\}\] as on those intervals we'll just get the Lebesgue measure as they are outside of $I.$ Then, for $J_i$ for $i=1,2,3$ we get \[\frac{a|I|}{4}\leq\mu(J_1)\leq \frac{a|I|}{2}\] and \[\frac{b|I|}{4}\leq \mu(J_2)\leq |I|\] and \[\frac{a|I|}{4}\leq\mu(J_3)\leq \frac{3|I|}{2}.\]

    The case discussed above is the only one that is essentially different (and only in a small way).  When it comes to the exhaustion procedure most of the details are then identical with the exception of replacing $p$ by $n$.  Thus there exists a family of measures which is $n_i$-adic doubling for each $n_i$ but not doubling overall.
\end{proof}

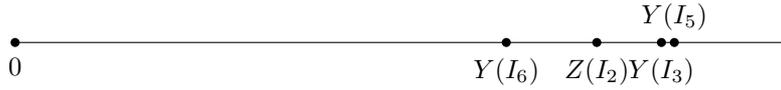
\begin{figure}[ht]
\begin{tikzpicture}[scale=8.5]
\label{finite base figure}
\draw (0,0) -- (1.2,0); 
\fill (0,0) circle [radius=.2pt];
\fill (1,0) circle [radius=.2pt];
\fill (0, -0.01) node [below] {$0$};
\fill (1, -0.01) node [below] {$Y(I_3)$};
\fill (.9, 0) circle [radius=.2pt];
\fill (.9, -0.01) node [below] {$Z(I_2)$};
\fill (1.02, 0) circle [radius=.2pt];
\fill (1.02, .075) node [below] {$Y(I_5)$};
\fill (.76, -0.01) node [below] {$Y(I_6)$};
\fill (.76, 0) circle [radius=.2pt];

\end{tikzpicture}
\caption{$I_2,I_3,\ldots,I_n$.  Not to scale.}
\end{figure}

Now, with an eye towards applications, we will compactify the construction of this measure. That way we will get applications to weight and function classes over compact sets.

Here is the adjusted procedure. Given an initial $\alpha$ take an $x$ such that \[\left|\frac{1}{2^{x}}-\frac{1}{n^{[x\log_n 2]}}\right|<\frac{2^{-100\alpha}}{2^x}.\]

Then, we choose 
\[I_2^{(\alpha)}=\left[0,\frac{1}{2^{x-1}}\right) \text{ and } I_n^{(\alpha)}=\left[0,\frac{1}{n^{[x\log_n 2]-1}}\right).\] We keep the same re-weighting procedure on $I_2^{(\alpha)}$ but with the exception of re-weighting the first step as follows: give the interval $[0,\frac{1}{2^{x+1}}]$ the weight $1$, the interval $[\frac{1}{2^{x+1}},\frac{1}{2^x}]$ with the weight $a$, and $[\frac{1}{2^x},\frac{1}{2^{x-1}}]$ the weight $b-1.$ Then for each successive iteration go back to the original re-weighting procedure of multiplying by $b$ on the right half and $a$ on the left half. Then for any successive $\alpha$ take $x$ such that \[\left|\frac{1}{2^{x}}-\frac{1}{n^{[x\log_n 2]}}\right|<\frac{2^{-100\alpha}}{2^x},\] with $x$ sufficiently larger than the $x$ associated to the previous $\alpha$. This can be done since there are infinitely many $x$ of this form associated to this $\alpha.$ This re-weighting still provides us with a measure as we still get 

\begin{align*}
\frac{1}{2^{x-1}}&=\mu([0,\frac{1}{2^{x-1}}])\\
&=\mu([0,\frac{1}{2^{x+1}}])+\mu([\frac{1}{2^{x+1}},\frac{1}{2^x}])+\mu([\frac{1}{2^{x}},\frac{1}{2^{x+1}}])\\
&=1(\frac{1}{2^{x+1}})+(a)(\frac{1}{2^{x+1}})+(b-1)(\frac{1}{2^x})=\frac{1}{2^{x-1}}.
\end{align*}

Repeating the same procedure then compactifies the measure to $[0,1]$, which is $n$-adic doubling for all $n$ in our list but not doubling overall. 
   
\subsection{Obstacles to extending to an infinite set of bases}
It remains an open question to construct a measure that is doubling on an infinite set of bases, yet not doubling overall.  Here we show that while parts of our construction generalize to the infinite base setting, there are still significant obstacles to overcome to prove the folkloric conjecture.  In particular, in attempting to solve this question we run into a key obstacle. This obstacle turns out to be more general and we shed insight into what a construction to overcome this would need to surmount. Nevertheless, we show that at least our interval selection generalizes to the case of infinitely many $n\in\NN.$ That is, we can find some $x\in\NN$ such that \[\left|\frac{1}{2^x}-\frac{1}{n^{[x\log_n 2]}}\right|<\frac{\varepsilon}{2^x}\] for all $n \in\NN$.

We begin by using the fact that the infinite torus $\TT^\infty$ is metrizable, and thus is endowed with the triangle inequality, to extend the interval selection above.  This would accommodate an infinite base set.  Though standard, for completeness we show the metrizability of $\TT^\infty$ in the appendix. 
\begin{lemma}
    Let $\varepsilon>0.$ Then, there exists infinitely many $x\in\NN$ such that for all $n\in\NN$ which is not a power of $2$ we have \[\left|\frac{1}{2^x}-\frac{1}{n^{[x\log_n 2]}}\right|<\frac{\varepsilon}{2^x}.\]
\end{lemma}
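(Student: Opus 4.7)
The plan is to extend Lemma \ref{finite base approx} to infinitely many bases, using the metrizability of $\TT^\infty$ established in the appendix. Setting $\eta_n := x\log_n 2 - [x\log_n 2]$, a direct computation gives $n^{[x\log_n 2]} = 2^x \cdot n^{-\eta_n}$, so the target inequality is equivalent to $|1 - n^{\eta_n}| < \varepsilon$. This in turn is implied by a coordinate-wise tolerance $|\eta_n| < \delta_n$, where $\delta_n$ is chosen so that $|1 - n^{\pm\delta_n}| < \varepsilon$ (so $\delta_n$ is of order $\varepsilon/\log n$). The goal therefore reduces to producing infinitely many $x \in \NN$ with $|\{x\log_n 2\}| < \delta_n$ simultaneously for every $n \in \NN$ that is not a power of $2$.

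I would then study the orbit map $\Phi \colon \NN \to \TT^\infty$, $x \mapsto (x\log_n 2 \bmod 1)_n$, indexed by the non-powers of $2$. For any fixed finite subset of coordinates, Proposition \ref{dense in torus} applied to the $\QQ$-span of the corresponding $\log_n 2$ already gives infinitely many $x$ making those finitely many coordinates simultaneously arbitrarily small. A diagonal/exhaustion argument across an increasing sequence of finite index sets, combined with sequential compactness of the metrizable compact group $\TT^\infty$, should yield a sequence $x_j$ with $\Phi(x_j) \to 0$ in the product topology. A pigeonhole-style differencing trick (take $x := x_j - x_{j'}$ for two orbit points close to each other in the $\TT^\infty$-metric) then upgrades this to infinitely many $x$ with $\Phi(x)$ inside any prescribed product-topology open neighborhood of $0$.

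To convert product-topological smallness into the true uniform bound, I would split the indices into a head $n \leq N$ and a tail $n > N$. The head is handled directly by the accumulation argument above, which forces $|\{x\log_n 2\}| < \delta_n$ for $n \leq N$. For the tail, once $N$ is large enough relative to $x$ the fractional part satisfies $\{x\log_n 2\} = x\log_n 2 = (x\ln 2)/\ln n$, so $|\eta_n| < \delta_n$ reduces to comparing two quantities that both decay like $1/\log n$; this can be arranged by calibrating the size of $x$ against $\varepsilon$ at each stage of the diagonal construction.

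The main obstacle is precisely the mismatch between the product topology on $\TT^\infty$, which controls only finitely many coordinates at a time, and the simultaneous coordinate-wise control with decaying tolerances $\delta_n \sim \varepsilon/\log n$ that the conclusion demands. Arranging the head/tail split together with the interplay between $x$, $N$, and $\varepsilon$ so that both regimes deliver the requisite bound is the technical heart of the argument; this is also where the greatest care is needed, since the tail estimate forces a subtle compatibility between the size of $x$, the chosen metric on $\TT^\infty$, and the decay rate of the tolerance sequence $\{\delta_n\}$.
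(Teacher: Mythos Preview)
Your overall strategy matches the paper's closely: the paper also encodes the data as a point $z = (\log_{n_i} 2)_i \in \TT^\omega$, invokes the finite-torus result together with Proposition~\ref{torus 2} to place $0$ in the product-topology closure of $\{xz : x \in \ZZ\}$, and then passes directly to the conclusion. The paper does \emph{not} perform a head/tail split; it simply asserts that product-topological proximity to $0$ yields a single $\delta$ with $|\{x\log_n 2\}| < \delta$ for every $n$, and then that $|1 - n^{\pm\delta}| < \varepsilon$.

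You are right that this last inference deserves scrutiny, but your proposed repair via a head/tail decomposition has a genuine gap. In the tail regime you take $n$ large enough that $[x\log_n 2] = 0$; then $n^{[x\log_n 2]} = 1$ and the target inequality becomes $|2^{-x} - 1| < \varepsilon \cdot 2^{-x}$, i.e.\ $2^x - 1 < \varepsilon$, which fails for every $x \geq 1$ once $\varepsilon < 1$. In the language of your tolerances, $\eta_n = x\ln 2/\ln n$ and $\delta_n \asymp \varepsilon/\ln n$ do both decay like $1/\ln n$, but the constants are incompatible: the comparison $\eta_n < \delta_n$ collapses to $x\ln 2 \lesssim \varepsilon$, ruling out all $x \geq 1$. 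Since for every fixed $x \in \NN$ the bases $n > 4^x$ already violate the stated inequality, no calibration of $x$, $N$, and $\varepsilon$ can close this gap; the tail regime is a genuine obstruction that the paper's argument likewise does not address.
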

\begin{proof}
    Consider \[z=(\log_3(2),\log_5(2),\log_6(2),\ldots)\in \TT^\omega\] where $z_i$ is $\log_{n_i}(2)$ where $n_i$ is the $i^{\text{th}}$ natural number which is not a power of $2.$ By Proposition \ref{torus 1} we have that in any finite sub-torus $0$ is a limit point of the $\ZZ$-orbit of $z.$ Hence, by Proposition \ref{torus 2} we have that $0$ is a limit point of the $\ZZ$-orbit of $z$ in $\TT^\omega.$ Hence, there exists infinitely many $x\in \NN$ such that $|\{x\log_n 2\}|<\delta<\frac{1}{2}$ for each $n$ such that \[\left|1-\frac{2^x}{n^{[x\log_n 2]}}\right|=|1-n^{\delta}|<\varepsilon\] so \[\left|\frac{1}{2^x}-\frac{1}{n^{[x\log_n 2]}}\right|<\frac{\varepsilon}{2^x}.\] In the case where $n$ is a power of $2$ then there are infinitely many $x$ such that we get $0.$
\end{proof}

\begin{rmk}
While the interval construction works for an infinite list, the issue that remains is the proof of $n$-adic doubling. This is because the constants chosen to ensure doubling cannot be made uniform under the given exhaustion procedure in \cite{AH}. In particular, all of the constants for $n$-adic doubling depend on $N=\lfloor \log_2 n\rfloor +1$ which becomes arbitrarily large, so we cannot ``exhaust" all $n$-adic children (in the language of \cite{AH}) for infinitely many $n$ uniformly.  This indicates a logarithmic growth in the doubling constant, depdendent on the base $n$.  Making this constant uniform does not seem possible under this construction.
\end{rmk}
\subsection{Relationship to Hasse principle}
Finally, we comment about an analogy of our progress toward a Hasse principle. In general terms, the Hasse principle states that if a system of equations has a solution $\mathrm{mod}\, p$ for all primes $p$ and a real solution (which can be viewed as $\mathrm{mod}\, p_\infty$, or the prime at infinity), then it has an integer solution. In \cite{AH}, we introduced an analogue of the Hasse principle with the ``solution $\mathrm{mod}\, p$" being replaced by ``$p$-adic doubling" and an ``integer solution" being replaced by ``doubling". Thus, being able to construct a measure that is doubling for all primes yet not overall would be an analogue of the Hasse principle in this harmonic analysis setting. Here we amplify this analogy by additionally connecting the existence of a real solution to our setting. In our setup, the real solution corresponds to the measure being doubling on any interval around $0$. Since $0$ is never an interior point for any $n$-adic interval, capturing the behavior of intervals containing $0$ cannot be done by the $n$-adic framework and needs to be considered separately, just as real solutions do for the Hasse principle. Indeed, the failure of doubling for our examples stems from intervals not including $0$, so all of our measures on $\RR$ satisfy doubling ``at $0$". Interestingly, in the next section we show that for the function class $VMO$, the infinite intersection of $n$-adic VMO is never equal to the full VMO; however in this example, the failure occurs around the point $0$ (and thus this example would not show the failure of the Hasse principle, since there would be no ``real solution"). Either finding a new construction or showing one cannot exist to demonstrate or break the Hasse principle in this setting would be an exciting development. 

\section{Applications}
\label{applications}
We are now ready to show a variety of applications to weight and function classes. We begin with definitions and short remarks, segway into an observation about the VMO function class involving infinite intersections, and closing with the proof of Theorem \ref{application theorem}. 

Let $w_{\mu}$ be the weight associated to the measure $\mu$ in Section 4, that is

$$
\mu(I) = \int_{I}w_{\mu}dx\text{, for any interval $I$}. 
$$

\begin{defn}\label{RH_nweightsdefinition}
    Let $1 < r < \infty$. We say $w \in RH_r$, the \emph{r-reverse H\"older class}, if 
    $$
    (\fint_I w^r)^{\frac{1}{r}} \leq C \fint_I w
    $$
    for all intervals $I$, where $C$ is an absolute constant. Moreover, we say $w \in RH_1$ if $w \in RH_r$ for some $r>1$, that is 
$$
RH_1 = \bigcup_{r > 1}RH_r.
$$
\end{defn}

\begin{defn}\label{RH_n^pweightsdefinition}
    Let $1 < r < \infty$. We say $w \in RH_r^p$ if 
    $$
    (\fint_P w^r)^{\frac{1}{r}} \leq C \fint_P w
    $$
    for all $p$-adic intervals $P$, where $C$ is an absolute constant and $w$ is $p$-adic doubling.
\end{defn}

\begin{defn}\label{A_rweightsdefinitions}
    Let $1 < r < \infty$. We say a weight $w \in A_r$, the \emph{Muckenhoupt $A_r$ class} if
    $$
    \sup_I(\fint_I w(x) dx)(\fint_I w(x)^{\frac{-1}{r-1}}dx)^{r-1} < \infty
    $$
    where the supremum is taken over all intervals $I$. Moreover, we say $w \in A_{\infty}$ if $w \in A_r$ for some $r > 1$, that is 
    $$
    A_\infty = \bigcup_{r>1}A_r.
    $$
\end{defn}

\begin{defn}\label{A_r^pweightsdefinitions}
    Let $1 < r < \infty$. We say a weight $w \in A_r^p$ if
    $$
    \sup_P(\fint_P w(x) dx)(\fint_P w(x)^{\frac{-1}{r-1}}dx)^{r-1} < \infty
    $$
    where the supremum is taken over all $p$-adic intervals $P$. Moreover, we say $w \in A_{\infty}$ if $w \in A_r^p$ for some $r > 1$, that is 
    $$
    A_\infty^p = \bigcup_{r>1}A_r^p.
    $$
\end{defn}

\begin{defn}\label{BMOdefinition}
    We say a function is in $BMO$ if and only if \begin{align*}
        ||f||_{BMO} \coloneqq \sup_I \fint_I |f - \fint_I f|< \infty 
    \end{align*}
    where $I$ is any interval and $\fint_I$ gives the average over $I$ with respect to Lebesgue measure. A function is similarly in $BMO_p$ if and only if it satisfies the above condition when restricted to $p$-adic intervals $I$.
\end{defn}

The class $VMO$, or functions of vanishing mean oscillation, was introduced by Sarason on the torus \cite{S}.  However, Coifman and Weiss \cite{CW} modified Saranson's definition when they extended it to the real line so as to have a duality relationship with the Hardy space $H^1$ (see discussion below).  We list the definition on the real line first.
\begin{defn}\label{VMOdefinition}
    A function $f$ in $BMO(\RR)$ is said to be in $VMO(\RR)$ if  it satisfies the following conditions:
    \begin{enumerate}
        \item $\lim_{\delta \rightarrow 0} \sup_{I: |I|< \delta}\fint_I |f-\fint_If| = 0$;
        \item $\lim_{N \rightarrow \infty} \sup_{I: |I|> N}\fint_I |f-\fint_If| = 0$; and 
        \item $\lim_{R \rightarrow \infty} \sup_{I: I\cap B(0,R) = \emptyset}\fint_I |f-\fint_I f| = 0$.
    \end{enumerate}
    where $B(0,R)$ is the ball centered at zero of radius $R$.
\end{defn}
With this definition, $VMO$ is the closure in the $BMO$ norm of the class $C_0$ (continuous functions with compact support).
The definition on the torus is the same, but without the second and third conditions.
\begin{defn}\label{VMOpdefinition}
    A function $f$ in $BMO_p (\RR)$ is said to be in $VMO_p(\RR)$ if  it satisfies the following conditions: 
    \begin{enumerate}
        \item $\lim_{\delta \rightarrow 0} \sup_{I: |I|< \delta}\fint_I |f-\fint_If| = 0$;
        \item $\lim_{N \rightarrow \infty} \sup_{I: |I|> N}\fint_I |f-\fint_If| = 0$; and 
        \item $\lim_{R \rightarrow \infty} \sup_{I: I\cap B(0,R) = \emptyset}\fint_I |f-\fint_I f| = 0$,
    \end{enumerate}
    where $I$ is always a $p$-adic interval.
\end{defn}

We will also work with the Hardy sapces $H^1$ and $n$-adic Hardy spaces $H^1_n$.  The classical Hardy space $H^1$ is a subspace of $L^1$, and has several useful and equivalent definitions.  We refer the reader to \cite{Grafakos} for a description of these spaces.  Since we will not be using the particulars of the definitions and only duality in our proofs, we will content ourselves with using the duality relationships as a definition. Note that with Hardy spaces we use sums instead of intersections; this can be seen as stemming from the role of the maximal functions in the definitions.

We will make use of the following duality relationships shortly.
\begin{prop}
    The dual of $H^1$ is $BMO$, and the dual of $VMO$ is $H^1$.  That is
    \begin{enumerate}
        \item $H^{1*} = BMO$
        \item $VMO^* = H^1$.
    \end{enumerate}
    These hold true both on $\RR$ and $\TT$. They also hold in the $n$-adic setting.
\end{prop}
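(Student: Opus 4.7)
The plan is to invoke the classical duality theorems and explain how the $n$-adic analogues follow by the same arguments. Both statements are well-established in the literature: $(1)$ is the celebrated Fefferman--Stein duality theorem, and $(2)$ is the Coifman--Weiss duality theorem, with the latter being precisely the reason the authors adopted the Coifman--Weiss definition of $VMO(\RR)$ over the simpler Sarason one on the torus.

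For $(1)$, to show $(H^1)^* = BMO$, I would proceed in two directions. For the easier direction, given $g \in BMO$, I define the linear functional $\Lambda_g(f) = \int fg$ initially on a dense subclass of $H^1$ (say finite linear combinations of $H^1$-atoms), estimate $|\Lambda_g(a)| \lesssim \|g\|_{BMO}$ for each atom $a$ by pairing $a$ with $g - g_I$ on the supporting interval $I$ and applying Cauchy--Schwarz together with John--Nirenberg, and then extend by density. For the reverse direction, given $\Lambda \in (H^1)^*$, I restrict $\Lambda$ to $L^2$-functions supported in a fixed interval $I$ with mean zero (which embed boundedly into $H^1$), apply the $L^2$ Riesz representation to obtain a locally $L^2$ function $g$ representing $\Lambda$, and verify $g \in BMO$ by testing against mean-zero characteristic-function-type atoms.

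For $(2)$, to show $VMO^* = H^1$, I use that $VMO$ is by definition the $BMO$-closure of $C_0(\RR)$ (or $C(\TT)$ on the torus). Given $f \in H^1$, the pairing $g \mapsto \int fg$ is well-defined on $C_0$ and extends to $VMO$ with norm $\lesssim \|f\|_{H^1}$ via the $H^1$-$BMO$ duality already established. The nontrivial direction is showing that every bounded linear functional on $VMO$ arises this way: here I would use the atomic decomposition together with a weak-$*$ compactness argument, exploiting the three vanishing conditions in Definition \ref{VMOdefinition} to control atoms at small scales, at large scales, and near infinity respectively. The three conditions are precisely what is required to make continuous compactly supported functions dense.

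For the $n$-adic versions, the classical proofs adapt essentially verbatim: replace the full grid of intervals with $\mathcal{D}(n)$ throughout, use the $n$-adic sharp maximal function and $n$-adic atoms (supported on $n$-adic intervals with mean zero), and carry out the John--Nirenberg argument using the $n$-adic Calderón--Zygmund stopping time decomposition. The hypothesis that $n$-adic doubling holds for the relevant measure ensures the standard $A_\infty$-type estimates pass through. The main point to check, rather than a real obstacle, is the $n$-adic John--Nirenberg inequality, which follows by iterating the $n$-adic stopping time and noting that siblings in $\mathcal{D}(n)$ have comparable measure under $n$-adic doubling. With these modifications, the duality pairings are defined and bounded in the $n$-adic setting by the same proofs as the classical case.
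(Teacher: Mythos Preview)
Your sketch is correct and follows the standard arguments, but note that the paper does not actually prove this proposition: it simply attributes (1) to Fefferman--Stein \cite{FS} and (2) to Coifman--Weiss \cite{CW}, treating both as classical background results to be cited rather than established. Your outline of the atom-pairing and Riesz-representation directions for (1), the density-of-$C_0$ argument for (2), and the observation that the $n$-adic variants follow by replacing the full interval grid with $\mathcal{D}(n)$ and running the $n$-adic Calder\'on--Zygmund/John--Nirenberg machinery, is exactly the content of those references and their standard dyadic adaptations. So you have supplied more than the paper does; for the paper's purposes a one-line citation suffices.
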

The proof of $H^1$ duality is due to Fefferman-Stein \cite{FS} and VMO duality is due to Coifman-Weiss \cite{CW}.

We first show, via a simple construction, that the \emph{infinite} intersection of the $n$-adic $VMO$ spaces on the real line is never $VMO$.  The structure of $VMO$ permits such a claim, showing how different it is from the other function classes.
\begin{thm}\label{VMOallnumberstheorem1}
    $\bigcap_{n \in \NN} VMO_n \neq VMO$
\end{thm}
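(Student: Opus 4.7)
The plan is to exhibit a single explicit function $f \in \bigcap_{n \in \NN} VMO_n$ with $f \notin VMO(\RR)$, and the natural candidate is the Heaviside step function $f = \chi_{[0,\infty)}$. The whole proof rests on one elementary geometric observation, already flagged in the Hasse-principle discussion preceding the theorem: by Definition \ref{adic def} every $n$-adic interval has the form $I = [(k-1)/n^m, k/n^m)$ with $k, m \in \ZZ$, so the open interval $((k-1)/n^m, k/n^m)$ cannot contain $0$, since there is no integer strictly between $k-1$ and $k$ with $k-1 < 0 < k$. Hence every $n$-adic interval lies entirely inside one of the two half-lines $[0,\infty)$ or $(-\infty,0)$, on each of which $f$ is constant.

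First I would record that $f \in BMO(\RR)$, which is immediate since $f \in L^\infty(\RR)$. For the positive direction, the observation above yields $\fint_I |f - \fint_I f| = 0$ for every $n$-adic interval $I$ and every $n \in \NN$. In particular $f$ has zero $BMO_n$ seminorm, so all three vanishing conditions in Definition \ref{VMOpdefinition} are satisfied trivially, placing $f$ in $VMO_n$ for every $n \in \NN$.

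To falsify $f \in VMO(\RR)$, I would test condition (1) of Definition \ref{VMOdefinition} against the symmetric intervals $I_\delta = (-\delta,\delta)$ with $\delta \to 0$: a direct computation gives $\fint_{I_\delta} f = \tfrac{1}{2}$ and $\fint_{I_\delta} |f - \tfrac{1}{2}| = \tfrac{1}{2}$ uniformly in $\delta$, so $\limsup_{\delta \to 0} \sup_{|I| < \delta} \fint_I |f - \fint_I f| \ge \tfrac{1}{2} > 0$, violating the vanishing requirement. There is no genuine obstacle here; the entire content of the argument is the asymmetry between general intervals, which may straddle the origin, and $n$-adic intervals, which by construction cannot. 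This also matches the paper's remark that the failure of VMO in this example is localized at $0$, the single point invisible to every $n$-adic grid, and so does not correspond to a real solution in the Hasse-principle analogy.
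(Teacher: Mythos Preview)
Your proposal is correct and is essentially identical to the paper's own proof: both use the Heaviside function $f=\chi_{[0,\infty)}$, the observation that $0$ is never interior to an $n$-adic interval so $f$ is constant on every such interval, and the computation on symmetric intervals $(-\delta,\delta)$ giving mean oscillation $\tfrac12$ to break condition (1) of Definition~\ref{VMOdefinition}. If anything, your justification of why no $n$-adic interval straddles $0$ is stated more carefully than the paper's.
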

\begin{proof}
    Define $f$ such that:
    \begin{align*}
    f(x) = 
        \begin{cases}
        0, x < 0\\
        1, x \geq 0
    \end{cases}
    \end{align*}
    We observe $0$ is an endpoint of all $n$-adic intervals. So, when restricting to $n$-adic intervals, $f$ will be constant and thus $VMO_n$ for all $n \in \NN$. We show  $\lim_{\delta \rightarrow 0} \sup_{I: |I|< \delta}\fint_I |f-\fint_If| > 0$. Fix $\delta > 0$ to be arbitrarily small. Then let $J$ be the interval $(-r,r)$ such that $|J| < \delta$. Noting that $\fint_{J}f = \frac{1}{2r}\int_{J}f = \frac{1}{2}$, we compute:
    \begin{align*}
        \sup_{I: |I|< \delta}\fint_I |f-\fint_If| &\geq \fint_J |f - \fint_J f| 
        =\frac{\int_{(-r,0)} |0 - \frac{1}{2}| + \int_{(0,r)} |1 - \frac{1}{2}|}{2r} 
        = \frac{1}{2}.
    \end{align*}
   
   Since this estimate holds as $\delta\to 0$, then $f\notin VMO$.

\end{proof}

\begin{rmk}
    The exact same proof also shows that for any finite set $S \subseteq \NN$ that 
    \[
    \cap_{n\in S}VMO_n \neq VMO.
    \]
\end{rmk}
Now we can prove Theorem \ref{application theorem}.  This proof includes an alternative proof of the $VMO$ result in the above remark and actually much more, since it uses a much more expansive framework. Indeed this proof ties all the weight and function cases together with the measures constructed earlier via a functional analysis framework.

\begin{proof}[Proof of Theorem \ref{application theorem}]

We construct the function $w_\mu$ to be the weight associated with our measure $\mu$, defined at the top of this section.  Then  $w_\mu \in \bigcup_{n \in S}RH_r^n$, and $w_\mu \not\in RH_r$, by the remarks in the introduction and the fact that the construction of the measure is the same as in \cite{AH}. Note that the proof of this result for two distinct primes appears in the final section of \cite{AH}, including carefully computing bounds on the reverse H\"older constants.  The computations therein only rely on the construction of the measure, and the exhaustion procedure, and as these remain unchanged, the proof also follows through in this setting.  We leave the details to the interested reader. Thus, $RH_r^n \not\subseteq \bigcup_{n \in S}RH_r^n$.

We select our $r$ such that $\max_{n\in S}{1 - \frac{\ln n}{\ln 2}} < r < \infty$. By eplacing all instances of $r$ in the reverse H{\"o}lder weights case with $\frac{-1}{r-1}$, we get an identical argument, such that $A_r \not\subseteq \bigcap_{n\in S} A_r^n$.  See \cite{AH} for details.

We then apply the fact that for any $A_\infty$ weight $w$, $\log|w| \in BMO$ (the same holds in the $n$-adic case).  Let $f(x) = \log w_\mu(x)$. Since $w_\mu \in \bigcap_{n\in S} A_r^n$, we know $f \in \bigcap_{n \in S}BMO_n$. We then apply an identical argument to \cite{ATV} to observe that $w_\mu \not\in BMO$. Observe that our measure construction differs slightly from \cite{ATV}, but not in a way that substantially alters our proof. In particular, we get that $||f||_{BMO} \geq \frac{\alpha}{4} \log \frac{b}{a}$. Since $\alpha$ can be made to be arbitrarily large, we get that $||f||_{BMO}$ is unbounded.  Also in a similar manner as \cite{ATV}, we see that $f\notin BMO_n$ for any $n\in S$.

Now to show the Hardy space result we use duality.  A standard theorem from functional analysis says that if $X,X_1,\dots ,X_k,Y,Y_1,\dots ,Y_k$ are Banach spaces with $X^* = Y$ and $X_i^* = Y_i$, and if $Y \neq \cap_i Y_i$, then $X \neq \sum_i X_i$.  We apply this result first with $X = H^1$, $X_i = H^1_i$, $Y = BMO$ and $Y_i = BMO_i$ to conclude that 
\[
\sum_{i: n_i \in S}H^1_{n_i} \neq H^1.
\]
Note that the functional analysis result also holds under the following modification: if $Y \neq \sum_i Y_i$, then $X \neq \cap_i X_i$. We then apply this functional analysis result for a second time with $X = VMO$, $X_i = VMO_i$, $Y = H^1$, and $Y_i = H^1_i$ to conclude that 
\[
\sum_{i: n_i \in S}VMO_{n_i} \neq VMO.
\]
These results hold on $\RR$ and $\TT$ since duality is the same in both cases, and we gave a compact version of our measure, originally defined on $\RR$, in Section \ref{finite bases}.
\end{proof}

 \section{Appendix: Description of necessity of new construction and intermediate lemmas}
 \subsection{Number theory behind need for a new construction in Section \ref{finite bases}}
 Here we show, given points $k_i/p_i^{m_i}$ for primes $p_1,\ldots,p_n$ satisfying the criterion established in \cite{AH}, there is only a finite number of $q$-adic points ${j}/{q^n}$ such that \[\frac{k_i}{p_i^{m_i}}-\frac{j}{q^n}=\frac{1}{p_i^{m_i}q^n}\] for all $i.$ In \cite{AH}, it was shown that there are infinitely many $q$-adic points $\frac{j}{q^n}$ satisfying the inequality above for a single prime $p_i$, which guaranteed the existence of at least one $q$-adic interval sitting completely inside of the chosen $p$-adic interval. As the existence of such a $q$-adic interval was necessary for the construction of the measure in \cite{AH}, the number theory process developed in \cite{AH} and \cite{ATV} would be difficult to generalize to more than two bases.

The main criterion for interval selection in \cite{AH} is that inside any interval $\widetilde{J}$ and a chosen $\varepsilon>0$ we can choose a $q$-adic interval $I\subset \widetilde{J}$ such that $0<Y(J)-Z(I)<\varepsilon|I|$ where $J$ is the smallest $p$-adic interval containing $I.$ Now, in order to generalize the construction of this measure for finitely many primes (and then finitely many natural numbers altogether) we want to be able to select a $q$-adic interval $I$ inside any $\widetilde{J}$ such that $0<Y(J_i)-Z(I)<\varepsilon |I|$ with  $J_i$ denoting the largest $p_i$-adic interval containing $I$ for any collection of primes $p_i>q.$ In Proposition 3.5 in \cite{AH}, the method for constructing these intervals is to start with a $p$-adic interval $J$ defined by $Y(J)=\frac{k}{p^{m}}$ and $k\equiv q^{l(p-1)}\mod p^{m}.$ The authors assume $k\equiv q^{l(p-1)}\mod p^m$ for the convenience of applying their number theoretic results. From there, they construct an infinite sequence of $n$ and a corresponding $j$ for each $n$ such that \[\frac{k}{p^{m}}-\frac{j}{q^n}=\frac{1}{p^{m}q^n}\] so by choosing $\frac{1}{p^{m}}<\varepsilon q$ and $n$ they get that \[\left[\frac{j-q+1}{q^n},\frac{j+1}{q^n}\right)\subseteq \left[\frac{k-1}{p^{m}},\frac{k-1+p}{p^{m}}\right).\] 
The difficulty in extending this number theoretic procedure is then due to the need to find infinitely many solutions $n$ to a system of equations. That is, if given 
\[Y(J_1)=\frac{k_1}{p_1^{m_1}} \text{ and } Y(J_2)=\frac{k_2}{p_2^{m_2}}\] with $k_1\equiv q^{l_1(p_1-1)(p_2-1)}\mod p_1^{m_1}$ and $k_2\equiv q^{l_2(p_1-1)(p_2-1)}\mod p_2^{m_2}$ (the proper analogue to the $k\equiv q^{l(p-1)}$ criterion) then if 
\begin{equation} \label{syst} \frac{k_1}{p_1^{m_1}}-\frac{j}{q^n}=\frac{1}{p_1^{m_1}q^n} \end{equation}
and
\begin{equation} \label{syst2} \frac{k_2}{p_2^{m_2}}-\frac{j}{q^n}=\frac{1}{p_2^{m_2}q^n}\end{equation} 
 solving for $j$ gives
\[\frac{k_2q^n-1}{p_2^{m_2}}=\frac{k_1q^n-1}{p_1^{m_1}}\] so \[\left(\frac{k_2}{p_2^{m_2}}-\frac{k_1}{p_1^{m_1}}\right)q^n=\frac{1}{p_2^{m_2}}-\frac{1}{p_1^{m_1}}\] and thus \[n=\log_q\left(\frac{p_1^{m_1}-p_2^{m_2}}{k_2p_1^{m_1}-k_1p_2^{m_2}}\right).\] Since $q, p_1, p_2$ are fixed primes, and the remaining variables on the right-hand side of the equation above are fixed by the intervals $J_1$ and $J_2$, there is at most one integer solution $n$ satisfying the equations \ref{syst} and \ref{syst2}. From our discussion above, we see that the argument in \cite{AH} cannot be directly extended to more than two prime bases. 

\subsection{Topological lemmas}
\begin{prop}
    \label{torus 1}
    Let $\TT^\infty :=\TT^\omega$ denote the countable direct product of the torus $\RR/\ZZ.$ Then $\TT^\omega$  is metrizable and in particular the induced product topology is equivalent to the topology generated by the metric \[d(x,y)=\sum_{k=1}^\infty \frac{|x_k-y_k|}{2^k}.\]
\end{prop}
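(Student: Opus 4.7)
My plan is the standard one: first check that $d$ is a metric, then verify the metric topology agrees with the product topology by establishing both inclusions on basic open sets. On each factor $\TT = \RR/\ZZ$ I would take $|x_k - y_k|$ to denote the quotient distance $d_\TT(x_k, y_k) = \min_{n \in \ZZ}|x_k - y_k - n|$, which takes values in $[0, 1/2]$. Then the series defining $d(x,y)$ is dominated by $\sum_k 2^{-k-1} < \infty$, so $d$ is finite, and the three metric axioms transfer termwise from those of each $d_\TT$: positivity and the identity of indiscernibles coordinatewise, symmetry from symmetry of each $d_\TT$, and the triangle inequality summand-by-summand.

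For the equivalence of topologies I would handle the two inclusions separately. To show every product-open set is $d$-open, I take a basic open set $U = V_1 \times \cdots \times V_N \times \prod_{k>N}\TT$ and a point $x \in U$, choose $\delta_k > 0$ with $B_{d_\TT}(x_k, \delta_k) \subseteq V_k$ for each $k \leq N$, and set $\delta = \min_{k \leq N} 2^{-k}\delta_k$. Then $d(x,y) < \delta$ forces $d_\TT(x_k, y_k) \leq 2^k d(x,y) < \delta_k$ for each $k \leq N$, so $B_d(x, \delta) \subseteq U$. Conversely, to show every $d$-ball is product-open, given $B_d(x,\varepsilon)$ I would pick $N$ so large that $\sum_{k > N} 2^{-k-1} < \varepsilon/2$ and set $\eta = \varepsilon/(2N)$; the cylinder $W = \prod_{k \leq N} B_{d_\TT}(x_k, \eta) \times \prod_{k>N}\TT$ then sits inside $B_d(x,\varepsilon)$, since for $y \in W$ the head contributes strictly less than $N\eta \leq \varepsilon/2$ and the tail is automatically bounded by $\varepsilon/2$ using $d_\TT \leq 1/2$.

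I do not anticipate any genuine obstacle here; the proof is entirely standard for countable products of bounded metric spaces. The only point requiring mild care is fixing the quotient-metric convention on each $\TT$, which is what furnishes the uniform tail bound $|x_k - y_k| \leq 1/2$ used in selecting $N$ above, and cleanly separating the ``finite head'' of the series (controlled via product-open cylinders) from the ``infinite tail'' (controlled via summability). Metrizability of $\TT^\omega$ is then immediate from the equivalence of the two topologies.
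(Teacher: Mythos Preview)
Your proposal is correct and follows essentially the same approach as the paper: both verify the two inclusions by bounding a single coordinate via $d_\TT(x_k,y_k)\le 2^k d(x,y)$ in one direction and by splitting the series into a finite head (controlled via a cylinder) and a summable tail in the other. Your write-up is in fact more careful than the paper's---you explicitly fix the quotient-metric convention on each factor and verify the metric axioms, whereas the paper leaves these implicit.
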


\begin{proof}
    This metric is likely well-known -- we found it in \cite{Sato}.  There the fact that the open balls from this metric are equivalent to the product topology on $\TT^\omega$ is presented without proof. For completeness, we give a proof.

    First, let $x\in \TT^\omega$ and $\varepsilon>0$ and take the open ball $B_\varepsilon(x)$ under the metric topology. We show that we can find some open set under the product topology which fits inside $B_\varepsilon(x).$, by taking the open intervals inside the first $n$ tori such that $\frac{n+1}{2^n}<\varepsilon$ with the open ball $|z_i-x_i|<\frac{2^i \varepsilon}{n+1}.$

    Next, take some open set under the product topology. These are going to be a countable union of finite products of open sets as these form a sub-basis of the product topology. Hence, it is sufficient to consider finite products of open sets. That is, we want to show that given some $U=U_{i_1}\times\cdots U_{i_k}$ where $U_{i_j}$ is an open set of the $i_j^{\text{th}}$ copy of $\TT$ we can find an open ball $B_\varepsilon(x)\subseteq U.$ Suppose without loss of generality that $U=B_{\varepsilon_1}(x_1)\times\cdots\times B_{\varepsilon_k}(x_k).$ Then, choose $y\in U$ and $\delta>0$ small enough such that if $|z_i-y_i|<\frac{2^i\delta}{k+1}$ then $|z_i-x_i|<\varepsilon_i$ for all $i.$ Hence, $B_\delta(y)\subseteq U.$
\end{proof}
\begin{prop}
\label{torus 2}
     Suppose $S\subseteq \TT^\omega$ has the property that $0$ is a limit point of $S$ inside any finite torus. Then, $0\in \overline{S}.$
\end{prop}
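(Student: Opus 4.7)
The plan is to use the explicit metric $d(x,y) = \sum_{k=1}^\infty \frac{|x_k - y_k|}{2^k}$ from Proposition \ref{torus 1} to reduce the assertion to a finite-dimensional statement that is immediate from the hypothesis. Fix $\varepsilon > 0$; the goal is to exhibit a point of $S$ within $d$-distance $\varepsilon$ of $0$, so that $0$ lies in the closure of $S$ in the metric (equivalently product) topology on $\TT^\omega$.

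First I would control the tail of the series defining $d$. Since $|x_k| \le 1/2$ for every $x_k \in \TT = \RR/\ZZ$, one has the uniform estimate
\[
\sum_{k > N} \frac{|s_k|}{2^k} \;\le\; \sum_{k > N} \frac{1}{2^{k+1}} \;=\; 2^{-N-1}
\]
for any $s \in \TT^\omega$, so choosing $N$ large enough that $2^{-N-1} < \varepsilon/2$ bounds the tail by $\varepsilon/2$ regardless of which $s$ is eventually selected. Next I would apply the hypothesis to the projection $\pi_N \colon \TT^\omega \to \TT^N$ onto the first $N$ coordinates: by assumption, $0$ is a limit point of $\pi_N(S)$ in $\TT^N$. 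The head of the metric, $\sum_{k=1}^N \frac{|x_k|}{2^k}$, is a continuous function of $(x_1, \dots, x_N) \in \TT^N$ that vanishes at the origin, so its sublevel set $\{x \in \TT^N : \sum_{k=1}^N |x_k|/2^k < \varepsilon/2\}$ is an open neighborhood of $0$ in $\TT^N$. By the limit point property, this neighborhood must contain a point of $\pi_N(S)$, so there exists $s \in S$ whose first $N$ coordinates satisfy $\sum_{k=1}^N |s_k|/2^k < \varepsilon/2$.

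Combining the two estimates yields $d(0,s) < \varepsilon/2 + \varepsilon/2 = \varepsilon$. Since $\varepsilon > 0$ was arbitrary, every $d$-ball around $0$ meets $S$, so $0 \in \overline{S}$. No real obstacle arises; the entire content is the two-part splitting of $d$ into a uniformly controllable tail and a finite-dimensional head, together with Proposition \ref{torus 1} which legitimizes using the finite-torus hypothesis to handle the head.
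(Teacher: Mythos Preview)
Your proof is correct and follows essentially the same approach as the paper: split the metric $d$ into a finite head and an infinite tail, bound the tail uniformly (you use $|x_k|\le 1/2$, the paper uses $|z_i|\le 1$), and then invoke the finite-torus limit-point hypothesis to make the head small. The only cosmetic difference is that the paper chooses a box-shaped neighborhood $|z_i|<2^i\varepsilon/(n+1)$ in the first $n$ coordinates whereas you use the sublevel set $\sum_{k\le N}|x_k|/2^k<\varepsilon/2$ directly, but the underlying idea is identical.
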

\begin{proof}
    Consider $B_\varepsilon(0),$ the open ball of radius $\varepsilon$ under $d.$ We show that there are infinitely many points of $S$ inside $B_\varepsilon(0).$

    Take $n$ such that $\frac{n+1}{2^{n}}<\varepsilon.$ Then, by the hypothesis there exists infinitely many points of $z\in S$ such that $|z_i|<2^i\frac{\varepsilon}{n+1}$ for $1\leq i \leq n$ as $S$ is dense in any finite torus.  Thus, for the chosen $z$ we get \[d(z,0)<\sum_{i=1}^n 2^i\frac{\varepsilon}{2^i(n+1)}+\sum_{i=n+1}^\infty\frac{|z_i|}{2^i} < n\frac{\varepsilon}{n+1}+\sum_{i=n+1}^\infty\frac{|z_i|}{2^i}\]\[<n\frac{\varepsilon}{n+1}+\frac{1}{2^n}<n\frac{\varepsilon}{n+1}+\frac{\varepsilon}{n+1}=\varepsilon.\]

    Hence we have infinitely many points inside this ball so therefore we have that $0$ is a limit point of $S$ inside the infinite torus.
\end{proof}
\begin{rmk}
    In effect all we are using in these two lemmas is the existence of a metric. We do not need to explicitly use the one given, only the triangle inequality.
\end{rmk}


\begin{thebibliography}{ABCD99}

\bibitem{AHJOW}  T.C. Anderson, B. Hu, L. Jiang, C. Olson and Z. Wei. On the translates of general dyadic systems on $\mathbb{R}$. \emph{Mathematische Annalen}, January, 2, 2020.

\bibitem{AH} T. C Anderson and B. Hu, A Structure Theorem on Intersections of General Doubling Measures and Its Applications, International Mathematics Research Notices, Volume 2023, Issue 9, May 2023, Pages 7423–7485.

\bibitem{AH2}
T.C. Anderson. and B. Hu. A structure theorem for measures with different bases.  J. Math. Anal. Appl. 505 (2022), no. 1, Paper No. 125620, 11 pp

\bibitem{ATV}  T.C. Anderson, C. Travesset, J. Veltri. “The Structure of Weight and Function Classes With Coprime Bases.” The Quarterly Journal of Mathematics (2021).

\bibitem{Sch}
James Ax. On Schanuel's Conjectures. \emph{Annals of Mathematics}, Second Series, Vol. 93, No. 2 (Mar., 1971), pp. 252-268 (17 pages) 

\bibitem{BMW} D.M Boylan, S.J. Mills, and L.A. Ward. Construction of an exotic measure: dyadic doubling and triadic doubling does not imply doubling. \emph{J. Math. Anal. Appl.} 476 (2019), no. 2, 241--277.

\bibitem{Cruz} D. Cruz-Uribe.  Function spaces, embeddings and extrapolation X, Paseky.  Matfyzpress, Charles University, 2017.

\bibitem{CN} D. Cruz-Uribe; C.J. Neugebauer. The structure of the reverse Hölder classes. Trans. Amer. Math. Soc. 347 (1995), no. 8, 2941--2960. 

\bibitem{CW} R. Coifman and G. Weiss.  Extensions of Hardy spaces and their use in analysis  Bull. Amer. Math. Soc. 83 (1977), no. 4, 569–645.

\bibitem{FS} C. Fefferman and E.M. Stein.  $H_p$ spaces of several variables.  Acta Math.129(1972), no.3-4, 137–193.

\bibitem{GJ} J. B. Garnett and P. W. Jones. BMO from dyadic BMO. Pacific J. Math., 99(2):351–371, 1982.

\bibitem{G} F. W. Gehring,  The $L^p$ integrability of partial derivatives of a quasiconformal mapping, Acta. Math.130 (1973), 265–277

\bibitem{Grafakos} L. Grafakos.  Modern Fourier analysis, Third edition. Grad. Texts in Math., 250. Springer, New York, 2014.

\bibitem{Krantz} S.G. Krantz. On functions in $p$-adic BMO and the distribution of prime integers. \emph{J. Math. Anal. Appl.} 326 (2007), no. 2, 1437--1444.

\bibitem{LPW} J. Li, J. Pipher, L.A. Ward, Dyadic structure theorems for multiparameter function spaces, \emph{Rev. Mat. Iberoam}. {\bf 31} (2015), no. 3, 767--797.

\bibitem{P} C. Pereyra. Dyadic harmonic analysis and weighted inequalities: the sparse revolution.
In: Aldroubi A., Cabrelli C., Jaffard S., Molter U. (eds) New Trends in Applied Harmonic Analysis, Volume 2. Applied and Numerical Harmonic Analysis. Birkhauser, Cham (2019) 259--239. Available at arXiv:1812.00850v1

\bibitem{PWX} J. Pipher, L.A. Ward, and X. Xiao. Geometric-arithmetic averaging of dyadic weights. Rev. Mat. Iberoam. 27 (2011), no. 3, 953--976.
\bibitem{S}
D. Sarason.  Functions of vanishing mean oscillation.  \emph{Trans. Amer. Math. Soc.} 207(1975), 391–405.

\bibitem{Sato}  S. Saks. Theory of the Integrals. Dover, 1964.

\bibitem{TM} T. Mei, $BMO$ is the intersection of two translates of dyadic $BMO$. \emph{C.R. Acad. Sci. Paris, Ser}. I {\bf 336} (2003), 1003--1006.

\bibitem{Wu} J-M. Wu, Doubling measures with different bases, Colloq. Math., 76 (1) (1998), pp.
49–55

\end{thebibliography}
\end{document}